\def\imod#1{\allowbreak\mkern10mu({\operator@font mod}\,\,#1)}
\newtheorem{theorem}{Theorem}[section]
\newtheorem{lemma}[theorem]{Lemma}
\newtheorem{proposition}[theorem]{Proposition}
\newtheorem{conjecture}[theorem]{Conjecture}
\theoremstyle{definition}
\newtheorem{definition}[theorem]{Definition}
\theoremstyle{remark}
\newtheorem{remark}[theorem]{Remark}
\title[Avoiding monochromatic sub-hyperpaths]{Avoiding monochromatic sub-paths in uniform hypergraph paths and cycles\footnote{The authors wish to acknowledge support from the Center for Undergraduate Research in Mathematics and the National Science Foundation through NSF grant DMS-1722563.}
}
\author[W.~Billings]{W. Zane Billings}
\author[J.~Clifton]{Justin Clifton}
\author[J.~Hiller]{Josh~Hiller }
\author[T.~Meek]{Tommy Meek}
\author[A.~Penland]{Andrew Penland}
\author[W.~Rogers]{Wesley Rogers}
\author[G.~Smokovich]{Gabriella~Smokovich }
\author[A.~Velasquez-Berroteran]{Andrew~Velasquez-Berroteran }
\author[E.~Zamagias]{Eleni~Zamagias }
\date{\today}
\begin{document}

\maketitle

\markboth{Z. BILLINGS ET AL}{AVOIDING MONOCHROMATIC SUBPATHS}

\begin{abstract}
      We present a recursive formula for the number of ways to color $j$ vertices blue in an r-uniform hyperpath of size $n$ while avoiding a blue monochromatic sub-hyperpath of length k. We use this result to solve the corresponding problem for $r-1$-tight r-uniform paths and loose r-uniform cycles. This generalizes some well known results from reliability engineering and analysis.      
      \\ 
      Keywords: hypergraphs, reliability engineering, recurrence relation, consecutive k-out-of-n:F systems, cycles, paths.
\end{abstract}

\section{Introduction}

\begin{comment}
Throughout this note, the length of a path is the same as the size of the path which is the cardinality of the edge set of the path. We call the graph $P=(V(P),E(P))$ where the finite set $V(P)$ is the set of labeled vertices of $P$ which are $1,2,...,n$ and $\{i,j\}\in E(P)$ if and only if $|i-j|=1$ a 2-path (since every edge contains two vertices). Assume that the vertex set  can be partitioned into the two sets $R(P)$ and $B(P)$,  We think of $R(P)$ as the vertices which are colored red and $B(P)$ as the set of vertices colored blue,  and if $\{i,j\}\in E(P)$ and $i,j\in B(P)$ then we color $E$ blue. 
\end{comment}

Let $G(n)$ denote the number of binary strings of length $n$ which do not contain two consecutive 1's. It is well-known, but still pleasing to notice, that $G(n)$ satisfies the recurrence
\[
G(n+2) = G(n+1) + G(n)
\]
and is equal to the Fibonacci numbers, with indices shifted by 2. 

More generally, the count $G_c(n)$ of binary strings of length $n$ with no length-$c$ run of 1's satisfies the recurrence
\[
G_c(n+c) = \displaystyle\sum_{i=0}^{c-1} G_c(n+i)
\]
and has a well-known asymptotic apporoximation due to Feller~\cite{F1968}. 

If we add the additional parameter of the number of 1's in the string, the problem becomes more involved. A real-world motivation for this restriction comes from \textit{consecutive k-out-of-n:F systems} that arise in reliability engineering. Such a system consists of a line of $n$ identical components, and continues to function properly as long as no $k$ consecutive components have failed. A fundamental question, first approached in~\cite{S1981}, is given that exactly $j$ components have failed, what is the probability that the system still functions?

We can translate this question to graph theory. We view the components of the system as the vertices of a path, with functioning vertices colored red and failed vertices colored blue. Solving the original probability question leads to counting how many red/blue-colored paths of length $n$ have exactly $j$ blue vertices, but do not contain a monochromatic blue path of length $k$. If we let $F(n,k,j)$ denote the number of red/blue-colored paths of length $n$ with exactly $j$ blue vertices that do not contain a subpath of length $k$, then the probability of survival of a system where each component has identical, independent survival probability $p$ is 
\[
\displaystyle\sum_{j=0}^{k+1} F(n,k,j) p^j (1-p)^{n-j}.
\]

There are many useful and beautiful generalizations of this problem in both reliability theory and graph theory \cite{FA2005,  S1981, MKP2007, BP1999, P1990, VL2014}. 

However, many systems, both mechanical and biological, are better modeled by \textit{hypergraphs}, where we consider \textit{hyperedges} allowed to contain more than two vertices. Examples of such systems include complex molecular networks \cite{chemistry}, cancer formations processes \cite{OurPaper}, and modeling of the folksonomy \cite{physics}. Motivated by problems considered in reliability theory, as well as biology, we make the following definition, which generalizers the definition found in \cite{ZZYD2018}. 

\begin{definition}
Let $H$ and $K$ be hypergraphs, and let $p$ be a probability distribution. An \textit{$(H,K,p)$-reliable system} has the following attributes: \\
\begin{itemize}
    \item each vertex $v$ in $H$ has an independent identical distribution described by $p$, giving the probability that $v$ has failed.
    \item the system fails when there exists a collection of failed vertices that form a subhypergraph isomorphic to $K$.
\end{itemize}
\end{definition}

 The \textit{time to failure} of an $(H,K,p)$-reliable system is a random variable giving the time that it takes for a copy of $K$ consisting of failed vertices to exist inside $H$. The nature of the distribution for time to failure will be influenced by the distribution of time to failure for the individual vertices in the system, as well as by the overall shape of $H$: how many copies of $K$ it contains, as well as how they fit together inside $H$.

In effect, determining the probability of distribution of the time to failure for such a system is given by determining the number of ways in which exactly $j$ out of n failed vertices in the hypergraph could induce a failed $K$, along with the probability that $j$ vertices have failed. Since the latter probability calculation is simply $p^j(1-p)^{n-j}$, the problem of determining system lifetimes essentially reduces to a counting problem in hypergraphs, of the type we consider in this paper. 

This framework is extremely general and, to the best of our knowledge, has yet to be investigated outside of the previously mentioned cases when $H$ and $K$ are certain types of graphs. In this paper, we provide recurrences for these counts in the case when $H$ and $K$ are certain types of hypergraph paths and cycles. In addition to being a natural yet challenging starting point, these initial cases are motivated by the fact that they are easily detectable subhypergraphs that occur in larger hypergraphs. In practical applications, it is often enough to have upper or lower bounds on the probability of failure at a certain point. We make the following observations, which are not difficult to prove.   

\begin{itemize}
    \item If $G$ is a subhypergraph of $H$, then the time to failure of a $(G,K,p)$-reliable system will be less than or equal to that of an $(H,K,p)$ reliable system, since $G$ contains at least as many copies of $K$ as $H$ does.  
    \item If $G$ is a subhypergraph of $K$, then the time to failure of an $(H,G,p)$-reliable system will be less than or equal to that of an $(H,K,P)$ reliable system, since a failed copy of $K$ must exist before a failed copy of $K$ can. 
\end{itemize}

In addition to providing tools to address a probability calculation arising from reliability of certain systems defined using hypergraphs, we think these counts are of combinatorial interest in their own right. 

\section{Definitions}

We now provide some background and necessary definitions of hypergraphs, restricted to the case we consider. We denote the cardinality of set $A$ by $|A|$. We use $ \lfloor x \rfloor$ for the floor function applied to x, and $\lceil x \rceil$ for the ceiling function applied to $x$. The function of two parameters $_aR_b$ is the remainder function of $a$ divided by $b.$ We let $\binom{n}{k}$
represent the binomial coefficient $\displaystyle\frac{n!}{k!(n-k)!}$, with the convention that $\binom{n}{k} = 0$ if either $n$ or $k$ is negative.

\begin{definition}
An $r$-uniform hypergraph $H$ consists of a vertex set $V(H)$ and an edge set $E(H)$, where each $e \in E(H)$ is an $r$-element subset of $V(H)$. 
\end{definition}

\begin{definition}
An r-uniform hypergraph $P$ is a loose $r$-path if there is an ordering on the edges $E(P)$ where $|E_i\cap E_j|=1$ if $|i-j|=1$ and $|E_i\cap E_j|= 0$ otherwise. The length of a loose $r$-path $P$ is given $|E(P)|$. 
\end{definition}

\begin{definition}
Let $0<m<r$. An $r$-uniform hypergraph $P$ is a $m-$tight $r$-path with $n$ edges if there is an ordering on the vertices $0,1,...,h$ such that $E_1=\{0,1,2,...,r-1\}$, and $E_{i+1} = \{ v + m \mid v \in E_{i} \}$ for $0 \leq i \leq n-1$.   
\end{definition}

\begin{remark}
Note that an $r$-path is a $1-$tight $r$-path if and only if it is a loose $r$-path.
\end{remark}

\begin{definition}
Given a color $x\in \{red,blue\}$, we say that an edge is monochromatic color x, or simply that the edge is x if every vertex is colored x.
\end{definition}

\begin{definition}
A loose $r$-cycle of length $n$ is an $r$-uniform hypergraph which satisfies the following two conditions:
\begin{enumerate}
    \item Every edge has exactly $2$ vertices of degree $2$ and all other vertices of degree 1,
    \item the removal of any single edge, along with all the corresponding vertices of degree 1 from that edge, yields a loose $r$-path of length $n-1$.
\end{enumerate}
\end{definition}

\begin{definition}
A hypergraph is an $(r-1)$-tight $r$-cycle if it can be constructed, edge by edge, so that the first $n-r-2$ edges after the first edge each add exactly one vertex, the last $(r-1)$ edges add no vertices, for any $0<i<n$, $|E_i \cap E_{i+1}|=r-1$ and the removal of any $(r-1)$ consecutive edges (when read modulo n and retaining the vertices in each edge) yields an $(r-1)-$tight $r$-path. 
\end{definition}

\section{Results for loose paths and cycles}

\subsection{Results for loose paths}

In this subsection we derive recurrence relations for the number of ways to color the vertices of a loose $r$-path on $n$ edges while avoiding $k$ consecutive hyperedges with all vertices blue.

First we introduce notation for the quantities of interest. 
\begin{definition}
The number of ways we can color exactly $j$ vertices blue in an $r$-path with $n$ edges while avoiding a monochromatic blue subpath of length $k$ is denoted by $F_k(r,n,j)$
\end{definition}

To prove the main theorems of this section, we will need to use the following two easy proposition which we state without proof.

\begin{proposition}\label{PropBaseCases}
For $n\geq k,$ we have the following cases:
\begin{enumerate}
    \item If $j < k(r-1)+1$ then $ F_k(r,n,j)=\binom{n(r-1)+1}{j},$
       
    \item If $n\geq k+1, j > n(r-1)+1-\left\lceil\frac{n-(k-1)}{k+1}\right\rceil$ then $ F_k(r,n,j)=0.$
    \item if $n=k$, $j\geq n(r-1)+1$ then $ F_k(r,n,j)=0.$
\end{enumerate}

\end{proposition}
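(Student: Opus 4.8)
The plan is to handle the three parts separately; each reduces to a short structural observation about the vertices of a loose $r$-path, so I will first set up the common bookkeeping and then dispatch the cases in order (1), (3), (2).

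First I would record the basic counts. A loose $r$-path with $n$ edges has exactly $n(r-1)+1$ vertices, since $E_1$ contributes $r$ vertices and each later edge shares one vertex with its predecessor and contributes $r-1$ new ones. Every length-$k$ loose sub-path sitting inside $P$ consists of $k$ consecutive edges $E_i,E_{i+1},\dots,E_{i+k-1}$ (the only length-$k$ loose sub-paths available are runs of consecutive edges of $P$); call such a run a \emph{window}. There are $n-(k-1)$ windows, each on $k(r-1)+1$ vertices, and a window is monochromatic blue exactly when none of its vertices is red. With this in place, part (1) is immediate: a monochromatic blue sub-path of length $k$ forces at least $k(r-1)+1$ of its vertices to be blue, so if $j<k(r-1)+1$ no choice of $j$ blue vertices can create one, and all $\binom{n(r-1)+1}{j}$ choices are counted. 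Part (3) is equally quick: if $n=k$ then $P$ itself is a length-$k$ sub-path on all $n(r-1)+1$ vertices, so $j\ge n(r-1)+1$ forces every vertex blue, making $P$ monochromatic blue, hence $F_k(r,n,j)=0$.

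Part (2) is the one with content, and I would argue by a covering/pigeonhole bound. Call a red vertex a \emph{blocker} of a window if it lies in one of that window's $k$ edges; a coloring avoids every monochromatic blue length-$k$ sub-path iff each of the $n-(k-1)$ windows has a blocker. The key estimate is that a single vertex lies in at most two edges of $P$ (exactly two when it is the shared vertex of consecutive edges, one otherwise), and two consecutive edges $E_\ell,E_{\ell+1}$ together lie in at most $k+1$ windows, namely those indexed $\ell-k+1\le i\le \ell+1$; hence each red vertex blocks at most $k+1$ windows. Consequently a valid coloring must use at least $\bigl\lceil \frac{n-(k-1)}{k+1}\bigr\rceil$ red vertices, i.e.\ at most $n(r-1)+1-\bigl\lceil\frac{n-(k-1)}{k+1}\bigr\rceil$ blue vertices. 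If $j$ exceeds this bound, there are too few red vertices to block all windows, so some window is entirely blue and $F_k(r,n,j)=0$.

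The main obstacle is the edge-counting inside part (2): one must verify carefully that a vertex of a loose $r$-path meets at most two edges, and that a pair of consecutive edges occurs in at most $k+1$ of the $n-(k-1)$ windows, so that the pigeonhole inequality (number of red vertices)$\cdot(k+1)\ge n-(k-1)$ is valid. Everything else — the vertex count, the identification of length-$k$ sub-paths with windows, and parts (1) and (3) — is routine once that description is fixed.
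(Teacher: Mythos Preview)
Your argument is correct. The paper actually states this proposition without proof (it is introduced with ``which we state without proof''), so there is no authorial argument to compare against; you have supplied what the paper omits. Parts (1) and (3) are exactly the routine observations one expects. Your treatment of part (2) via the window-covering pigeonhole bound is clean, and the two technical checks you flag---that every vertex of a loose $r$-path has degree at most two, and that a degree-two vertex in $E_\ell\cap E_{\ell+1}$ lies in at most $k+1$ of the $n-(k-1)$ windows---are both correct (the first follows since $v\in E_{\ell-1}\cap E_\ell$ and $v\in E_\ell\cap E_{\ell+1}$ would force $v\in E_{\ell-1}\cap E_{\ell+1}=\varnothing$; the second from the explicit index range $\ell-k+1\le i\le \ell+1$ you wrote down). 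The covering inequality $R\cdot(k+1)\ge n-(k-1)$ then yields $R\ge\big\lceil (n-k+1)/(k+1)\big\rceil$ red vertices in any admissible coloring, which is exactly the bound in the statement.
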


%\begin{lemma}
%Let $M=(n(r-1)+1)$ and $N=((k-1)(r-1)+2(r-2)+1)$. \\
%If $j>[M/N](N-1)+_NR_M$, then $F_k(r,n,j)=0.$
%\end{lemma}

We will also need to introduce two auxiliary quantities of interest, related to the main one we consider. 

\begin{definition}
The number of ways we can color exactly $j$ vertices blue in an $r$-path with $n$ edges while avoiding a monochromatic blue subpath of length $k$ with the restriction that the first $i$ edges are blue and edge $(i+1)$ is not blue is denoted by $F_k^i(r,n,j)$.
\end{definition}

\begin{definition}
For a touple $(r,n,k,j)$ we say that $j$ is permissible if the touple is not described by any of the conditions of Proposition \ref{PropBaseCases}. 
\end{definition}

\begin{definition}
We use $F_k^*(h,n,j)$ to denote the number of ways to can color exactly $j$ vertices blue in an $r$-path with $n$ edges while avoiding a monochromatic blue subpath of length $k$ with the restriction that a fixed vertex in the first hyperedge  must be colored red.
\end{definition}

\begin{lemma}\label{l:F-star-relationship}
For positive integers $r,k$ and $m$, 
$$F_k^*(r,m,i)={\sum_{b=0}^{r-2}\binom{r-2}{b}F_k(r,m-1,i-b)}.$$
\end{lemma}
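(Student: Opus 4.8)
The plan is a direct counting argument obtained by conditioning on the colors of the vertices of the first hyperedge. Fix a loose $r$-path $P$ with edges $E_1,\dots,E_m$ listed in that order, and let $v_0\in E_1$ be the degree-one vertex of $E_1$ that the definition of $F_k^*$ forces to be red (all degree-one vertices of $E_1$ play the same role, so $F_k^*$ does not depend on the choice). Assume first that $m\ge 2$, and write $w$ for the unique vertex in $E_1\cap E_2$; then $E_1$ has exactly the $r-1$ degree-one vertices $E_1\setminus\{w\}$, one of which is $v_0$, leaving $r-2$ further \emph{private} vertices $E_1\setminus\{w,v_0\}$. Deleting $E_1$ together with its $r-1$ degree-one vertices yields the loose $r$-path $P'$ on edges $E_2,\dots,E_m$, and $V(P)$ is the disjoint union of $V(P')$ with $E_1\setminus\{w\}$.

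First I would set up the correspondence. A coloring counted by $F_k^*(r,m,i)$ is determined by the set $S\subseteq E_1\setminus\{w,v_0\}$ of private vertices colored blue together with the induced coloring of $V(P')$; if $|S|=b$ then $0\le b\le r-2$, the coloring of $V(P')$ uses exactly $i-b$ blue vertices, and there are $\binom{r-2}{b}$ choices of $S$ of size $b$. It then remains to show that, for each fixed $S$, the coloring of $V(P)$ avoids a monochromatic blue sub-hyperpath of length $k$ if and only if its restriction to $V(P')$ does; granting this, the admissible colorings of $V(P')$ are precisely those counted by $F_k(r,m-1,i-b)$, and summing over $b$ gives the claimed identity.

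The crux is this equivalence of avoidance conditions, and it is the step I would be most careful about. Because consecutive edges are the only pairs of edges of $P$ meeting in a single vertex, any sub-hyperpath of $P$ of length $k$ consists of a block $E_s,\dots,E_{s+k-1}$ of consecutive edges; hence a monochromatic blue sub-hyperpath of $P$ either avoids $E_1$ — and so lies inside $P'$ — or contains $E_1$, which would force every vertex of $E_1$, in particular $v_0$, to be blue, contradicting the standing constraint. Conversely every sub-hyperpath of $P'$ is a sub-hyperpath of $P$. So the forbidden configurations in $P$ and in $P'$ are literally the same, which finishes the main case.

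I do not anticipate a real obstacle beyond bookkeeping; the two things to keep in mind are the structural observation that sub-hyperpaths of a loose path are intervals of consecutive edges, and the boundary case $m=1$, where there is no vertex $w$, the path $P'$ degenerates to a single vertex, and one reads $F_k(r,0,t)=\binom{1}{t}$, so that Vandermonde's identity collapses $\sum_{b=0}^{r-2}\binom{r-2}{b}\binom{1}{i-b}$ to $\binom{r-1}{i}$ — which is indeed the number of ways to color the remaining $r-1$ vertices of the single edge $E_1$ with $v_0$ red.
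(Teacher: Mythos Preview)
Your proof is correct and follows essentially the same approach as the paper's: condition on the number $b$ of the $r-2$ remaining degree-one vertices of $E_1$ that are colored blue, observe that since the fixed degree-one vertex $v_0$ is red the edge $E_1$ can never be blue, and conclude that the coloring of the residual path of length $m-1$ is unconstrained beyond avoiding a blue length-$k$ subpath. Your write-up is more careful than the paper's one-paragraph sketch---you explicitly justify why the forbidden configurations in $P$ and $P'$ coincide and you treat the boundary case $m=1$---but the underlying argument is identical.
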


\begin{proof}
Since we know a fixed vertex of degree 1 in $E_1$ is red, then we have no worries that $E_1$ will be a blue edge. Thus we can pick any $0\leq b\leq r-2$ degree 1 vertices of $E_1$ to color blue. This leaves a hypergraph isomorphic to a path of length $m-1$ to color with exactly $i-b$ blue vertices, without introducing a blue hyperedge. Thus there are  $F_k(r,m-1,i-b)$ ways to color the remaining vertices.
\end{proof}

We can now begin proving the recurrence relations for loose paths. We begin with a few small but illuminating special cases. The first case is that of avoiding a single blue hyperedge.

\begin{theorem}\label{one-edge}
Let $r\geq 3, n>1$ and $j$ be permissible,  
\begin{align*} F_1(r,n,j) &= 
{\sum_{i=0}^{r-2}}  \binom{r-1}{i} F_1(r,n-1,j-i) \\
& +{\sum_{i=0}^{r-2}} \binom{r-2}{i} F_1(r,n-2,j-(r-1)-i).
\end{align*}
\end{theorem}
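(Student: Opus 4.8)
The plan is to condition on the color-pattern of the initial edges of the loose $r$-path and to peel off a bounded-size prefix in each case. Since we are avoiding a single blue hyperedge ($k=1$), the only forbidden local configuration is an entirely-blue edge $E_1$. I would split into two cases according to whether the fixed vertex of degree $1$ in $E_1$ (say the ``left endpoint'') is red or blue.

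\textbf{Case 1: the left endpoint of $E_1$ is red.} Then $E_1$ cannot be monochromatic blue regardless of how its remaining $r-1$ vertices are colored, so we may freely choose any $i$ of those $r-1$ vertices to be blue, for $0 \le i \le r-1$. Deleting the left endpoint and the $i$ blue vertices... more precisely, after fixing the colors on $E_1$, what remains to be colored is a loose $r$-path on $n-1$ edges (the shared vertex $E_1 \cap E_2$ has already had its color assigned), with $j - i$ blue vertices still to place, and no new constraint is created because $E_1$ is safe. This contributes $\sum_{i=0}^{r-1}\binom{r-1}{i}F_1(r,n-1,j-i)$. However, the statement groups the $i=r-1$ term separately, so I would instead only let $i$ range over $0 \le i \le r-2$ here, and handle $i = r-1$ (all of $E_1$ except the left endpoint blue) in Case 2's bookkeeping — but that is awkward. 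Cleaner: keep Case 1 as $\sum_{i=0}^{r-2}\binom{r-1}{i}F_1(r,n-1,j-i)$ together with the sub-case of Case 1 where all $r-1$ non-endpoint vertices of $E_1$ are blue.

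\textbf{Case 2: the left endpoint of $E_1$ is blue, and also all remaining vertices of $E_1$ are blue} — i.e., $E_1$ itself is entirely blue. This is forbidden, so it contributes $0$; equivalently, the only admissible colorings in which the left endpoint is blue have at least one other red vertex in $E_1$. I realize the intended decomposition is different: one should look at the \emph{second} edge. The term $\binom{r-2}{i}F_1(r,n-2,j-(r-1)-i)$ has the signature of: $E_1$ is entirely blue except that... no — $E_1$ has $r-1$ blue vertices contributing the $-(r-1)$, and then $E_2$ contributes $\binom{r-2}{i}$ which is the count for the $r-2$ private (degree-$1$) vertices of $E_2$, with $E_2$ forced to contain a red vertex. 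So the correct case split is: \textbf{(a)} $E_1$ is not all-blue, obtained by choosing which vertex of $E_1$ is red and then recursing on the $n-1$ remaining edges — but to avoid overcounting we instead say the \emph{last} vertex of $E_1$ (the one shared with $E_2$) is red or the ``first'' vertex is red, etc. The honest way, matching the formula, is: fix the shared vertex $E_1\cap E_2$; if it is red, we get $\sum_{i=0}^{r-2}\binom{r-1}{i}F_1(r,n-1,j-i)$ after using Lemma~\ref{l:F-star-relationship}-style reasoning on the remaining $r-1$ free vertices of $E_1$ wait — that shared vertex being red kills $E_1$ and $E_2$ both...

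\textbf{Revised plan (the one I would actually write up).} Condition on the shared vertex $v = E_1 \cap E_2$.
\begin{itemize}
\item If $v$ is \emph{red}: $E_1$ is automatically non-blue; color its other $r-1$ vertices freely, choosing $i$ of them blue ($0\le i\le r-1$), and what remains is an $r$-path on $n-1$ edges with a red vertex forced in its first edge — no wait, $v$ is in that path's first edge and $v$ is red — so it remains to color an $r$-path on $n-1$ edges with $j-i$ blue vertices and $v$ already red; but since $v$ red already makes edge $E_2$ non-blue, the rest is just $F_1(r,n-1,\cdot)$ with one vertex pre-colored... this forces me to use $F_1^*$.
\end{itemize}
I would therefore phrase it: if $v$ is red, the count is $\sum_{i=0}^{r-1}\binom{r-1}{i} F_1^*(r, n-1, j-i)$, and then apply Lemma~\ref{l:F-star-relationship} to expand $F_1^*$. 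If $v$ is blue, then to keep $E_1$ non-blue at least one of the $r-2$ private vertices of $E_1$ is red; and the remaining $r$-path on $n-1$ edges has $v$ blue in its first edge. Sorting out these two contributions and collapsing the resulting double sums via the Vandermonde-type identity $\sum \binom{r-2}{b}\binom{r-1}{i} = \binom{?}{?}$ should yield exactly the two stated sums, with the $n-2$ term arising from the sub-case where $E_2$ is also ``full of blue except its private part,'' forcing a second peel.

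\textbf{Main obstacle.} The genuine difficulty is the bookkeeping of overlaps: the shared vertices $E_1\cap E_2$ and $E_2\cap E_3$ are each counted in two edges, so a naive ``choose colors on $E_1$, recurse'' double-counts. The clean fix is to always decide colors only on the \emph{private} (degree-$1$) vertices of the peeled edge plus the single shared vertex leading into the recursion, which is exactly why $\binom{r-2}{i}$ (private vertices of the second edge) and $\binom{r-1}{i}$ (private vertices plus... ) appear with different upper indices. Making this split airtight — and verifying that the recursion only needs to reach back two edges because $k=1$ means a blue edge is detected immediately — is where the real work lies; the rest is routine substitution using Lemma~\ref{l:F-star-relationship} and Proposition~\ref{PropBaseCases} for the boundary terms.
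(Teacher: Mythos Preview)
Your overall strategy---peel off $E_1$, condition on its colors, recurse---is the right one, but you have chosen the wrong variable to condition on, and that is why the write-up spirals. The paper conditions not on the shared vertex $v=E_1\cap E_2$ but on the $r-1$ \emph{private} (degree-$1$) vertices of the leaf edge $E_1$. If $i\le r-2$ of those are blue, then $E_1$ already contains a red vertex and is safe regardless of the color of $v$; since $v$ belongs to the remaining $(n-1)$-edge path and is completely unconstrained there, the count is simply $\binom{r-1}{i}F_1(r,n-1,j-i)$---no star, no second peel. If all $r-1$ private vertices are blue, then $v$ is forced red, and the remaining path is counted by $F_1^{*}(r,n-1,j-(r-1))$; one application of Lemma~\ref{l:F-star-relationship} converts this into the second sum. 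That is the entire argument.

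Your shared-vertex split can also be made to work, but it needs no Vandermonde identity, and you stop before finishing it. If $v$ is red, the count is $\sum_{i=0}^{r-1}\binom{r-1}{i}F_1^{*}(r,n-1,j-i)$. If $v$ is blue, you must have $i\le r-2$ blue private vertices in $E_1$, and the remaining path with $v$ forced blue is counted by $F_1(r,n-1,j-i)-F_1^{*}(r,n-1,j-i)$. Adding these, every $F_1^{*}$ term cancels except the one with $i=r-1$, leaving exactly $\sum_{i=0}^{r-2}\binom{r-1}{i}F_1(r,n-1,j-i)+F_1^{*}(r,n-1,j-(r-1))$, which is the paper's intermediate expression. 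So the missing step in your proposal is just this cancellation; the ``Vandermonde-type identity'' and the worry that the recursion might need to reach further back are both red herrings.
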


\begin{proof}
Number the vertices $1,2,3,...,r$ $\in$ $E_1;$ $r,r+1,r+2,r+3,...,2r-1\in E_2$, etc. and consider the degree-1 vertices $1, 2, \ldots, r-1$ in the leaf $E_1$. We have the following cases:
\begin{enumerate}
    \item If none of these vertices are blue, then we have exactly $F_1(r,n-1,j)$ ways to color theremaining vertices in the remaining $n-1$ edges.
    \item If we assume that $0<k\leq r-2$ of these vertices are colored blue, then we have $\binom{r-1}{k}$ ways to choose which of these vertices to color blue. We then have $F_1(r,n-1,j-k)$ ways to color the remaining $n-1$ edges of the hypergraph. In total, this gives $\binom{r-1}{k} F_1(r,n-1,j-k)$ distinct colorings for this case. 
    \item If all $(r-1)$ of these vertices are colored blue, then vertex $r$ must be colored red. Then we have exactly $F_1^*(r,n-1,j-(r-1))$ possibilities for coloring the remainder of the hypergraph within the given constraints.
\end{enumerate} 

\noindent From these cases we see that 

$$F_1(r,n,j)= F_1(r,n-1,j)+ {\sum_{i=1}^{r-2}\binom{r-1}{i}F_1(r,n-1,j-i)}+ F_1^*(r,n-1,j-(r-1)).$$

\noindent Applying Lemma~\ref{l:F-star-relationship} follows that

$$F_1(r,n,j)=  {\sum_{i=0}^{r-2}\binom{r-1}{i}F_1(r,n-1,j-i)}+ \sum_{i=0}^{r-2}\binom{r-2}{i}F_1(r,n-2,j-(r-1)-i),$$

\noindent proving the theorem.

\end{proof}

The following theorem can be easily deduced from the results in \cite{S1981}, motivated by reliability theory; for completeness, we state and prove it now. 

\begin{theorem}\label{2thm}
For $k\geq 1,$ and permissible $j$
\begin{align*}
F_k(2,n,j) &= F_k(2,n-2,j-1)+F_k(2,n-1,j)  \\
& + \sum_{i=1}^{k-1}F_k(2,n-i-2,j-i-1).
\end{align*}
\end{theorem}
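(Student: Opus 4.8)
The plan is to run a leaf-based case analysis in the spirit of the proof of Theorem~\ref{one-edge}, but adapted to the feature that in an ordinary path ($r=2$) a single blue vertex does not yet create a blue edge, so instead of inspecting only the leaf we inspect the entire maximal run of blue vertices growing out of it. Fix the labelling $V = \{1, 2, \ldots, n+1\}$ with $E_i = \{i, i+1\}$ for $1 \le i \le n$, and focus on the leaf vertex $1$. Every admissible colouring falls into exactly one of: (a) vertex $1$ is red; or (b) vertex $1$ is blue and, since (for permissible $j$) not all vertices are blue, there is a unique $\ell \ge 1$ with vertices $1, \ldots, \ell$ blue and vertex $\ell+1$ red.

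In case (a), deleting vertex $1$ gives a path with $n-1$ edges, and the colourings of the original path with vertex $1$ red correspond bijectively to colourings of this shorter path with the same $j$ blue vertices and no blue subpath of length $k$; this contributes $F_k(2, n-1, j)$. In case (b) the initial block of $\ell$ blue vertices contains $\ell-1$ consecutive blue edges, so avoiding a blue subpath of length $k$ forces $\ell \le k$. Once $\ell$ is fixed, vertices $1, \ldots, \ell+1$ are determined and use exactly $\ell$ blue vertices, while the remaining vertices $\ell+2, \ldots, n+1$ form a path with $n-\ell-1$ edges; because the red vertex $\ell+1$ shares an edge with both the block and the tail, every blue subpath of the full colouring lies entirely within the block (hence has length at most $\ell - 1 \le k-1$) or entirely within the tail. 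Consequently the colourings in case (b) with maximal initial blue run of length exactly $\ell$ are in bijection with colourings of the tail path having $j-\ell$ blue vertices and no blue subpath of length $k$, so case (b) contributes $\sum_{\ell=1}^{k} F_k(2, n-\ell-1, j-\ell)$.

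Adding the two contributions gives $F_k(2,n,j) = F_k(2,n-1,j) + \sum_{\ell=1}^{k} F_k(2, n-\ell-1, j-\ell)$; re-indexing by $i = \ell-1$ rewrites the sum as $\sum_{i=0}^{k-1} F_k(2, n-i-2, j-i-1)$, and separating off the $i=0$ term $F_k(2, n-2, j-1)$ yields the stated identity. I expect the only real friction to be the boundary bookkeeping: confirming via Proposition~\ref{PropBaseCases} that for permissible $j$ the degenerate configurations --- an all-blue path, or an initial blue run longer than $k$ --- genuinely cannot occur, and checking that any terms $F_k(2, n', j')$ with $n'$ or $j'$ out of range are zero under the conventions fixed in Section~2, so that the finite right-hand side is the correct count.
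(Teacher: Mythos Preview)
Your proof is correct and essentially identical to the paper's: the paper also partitions colourings by the initial blue segment, phrased as ``the first non-monochromatic blue edge'' rather than ``the maximal initial run of blue vertices,'' but for $r=2$ these carve up the colourings in exactly the same way. The paper's two sub-cases for ``first edge not blue'' (vertex $1$ red / vertex $1$ blue) correspond to your case~(a) and your $\ell=1$, and its case of $i$ initial blue edges corresponds to your $\ell=i+1$, so after your re-indexing the two arguments match term by term.
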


\begin{proof}
Consider the first non-monochromatic blue edge in our 2-Path. If that is the first edge then we have two possibilities. If the first vertex is blue, we have $F_k(2,n-2,j-1)$ ways to color the remaining vertices. Otherwise, the first vertex is red, giving $F_k(2,n-1,j)$ ways to color the remaining vertices. Alternatively, let us assume that the first $1<i<k$ edges are colored monochromatically blue, but the $(i+1)^{st}$ edge is not. Then vertex $V_{i+1}$ is blue and vertex $V_{i+2}$ is red, yielding $F_k(2,n-i-2,j-i-1)$  ways to color the remaining vertices of the graph.
\end{proof}

The remaining results of this paper generalize the formula from Theorem \ref{2thm} to $r-$uniform hypergraphs for $r>2.$ This is done via Theorem \ref{k-path-3-uniform} which gives us the corresponding recurrence for $3-$uniform hypergraphs and Theorem \ref{r-uniform-k-paths} which provides the general case for $r>3.$

\begin{theorem}\label{k-path-3-uniform}
If $k>1,n>k$ and $j$ be permissible, then we have 

\begin{align*} F_k(3,n,j)&= F_k(3,n-1,j)+2F_k(3,n-1,j-1) \\ &+ F_k(3,n-2,j-2) 
+ F_k(3,n-2,j-3) \\ & +\sum_{i=1}^{k-1}F_k(3,n-(l+1),j-2l-1) \\
& +\sum_{i=1}^{k-1}F_k(3,n-(l+2),j-2(l+1)) \\ & +\sum_{i=1}^{k-1}F_k(3,n-(l+2),j-2(l+1)-1).
\end{align*}
\end{theorem}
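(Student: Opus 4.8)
The plan is to generalize the case analysis from Theorem~\ref{2thm} to the $3$-uniform setting, carefully tracking both the number of edges consumed and the number of blue vertices consumed in each case. As in the proof of Theorem~\ref{2thm}, I would condition on the position of the first non-monochromatic-blue edge. Number the vertices of the loose $3$-path so that $E_1 = \{1,2,3\}$, $E_2 = \{3,4,5\}$, and in general $E_i$ shares its last vertex with the first vertex of $E_{i+1}$; the degree-$1$ vertices of $E_1$ are $1$ and $2$, and vertex $3$ is the shared (degree-$2$) vertex.

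First I would handle the case where $E_1$ itself is not monochromatic blue. Here there are sub-cases according to how many of the degree-$1$ vertices $\{1,2\}$ are blue: if neither is blue we get $F_k(3,n-1,j)$; if exactly one is blue we get $2F_k(3,n-1,j-1)$ (the factor $2$ for the choice of which degree-$1$ vertex); and if both $1$ and $2$ are blue, then to keep $E_1$ non-monochromatic we need vertex $3$ red, which leaves a $3$-path of length $n-1$ whose first edge has a fixed red vertex, contributing $F_k^*(3,n-1,j-2)$. Applying Lemma~\ref{l:F-star-relationship} with $r=3$ gives $F_k^*(3,n-1,j-2) = F_k(3,n-2,j-2) + F_k(3,n-2,j-3)$, which accounts for the second line of the claimed recurrence.

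Next I would handle the case where the first $l$ edges are monochromatic blue and edge $E_{l+1}$ is the first that is not, for $1 \le l \le k-1$ (the constraint $l \le k-1$ is forced by avoidance of a blue subpath of length $k$). The first $l$ edges being blue uses $2l+1$ specific vertices (vertices $1$ through $2l+1$) all blue. Now $E_{l+1} = \{2l+1, 2l+2, 2l+3\}$ has vertex $2l+1$ already blue, and must not be monochromatic blue, so I case on the two remaining vertices $2l+2$ (degree $1$ in $E_{l+1}$... actually degree $2$, shared with $E_{l+2}$ — I need to be careful here) and $2l+3$. Parametrizing by how many of these are blue, subject to not all three being blue, and then passing to the residual path (using $F_k^*$ and Lemma~\ref{l:F-star-relationship} when the shared vertex is forced red) yields the three summation terms: $\sum_{l=1}^{k-1} F_k(3,n-(l+1),j-2l-1)$, $\sum_{l=1}^{k-1} F_k(3,n-(l+2),j-2(l+1))$, and $\sum_{l=1}^{k-1} F_k(3,n-(l+2),j-2(l+1)-1)$. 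Summing all contributions gives the stated formula.

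The main obstacle I anticipate is the bookkeeping in this last case: correctly identifying which vertex of $E_{l+1}$ has degree $2$ versus degree $1$, determining exactly when a vertex is forced to be red (and hence when $F_k^*$ and Lemma~\ref{l:F-star-relationship} must be invoked versus when a plain $F_k$ suffices), and verifying that the edge-count and blue-count shifts match $n-(l+1)$ vs.\ $n-(l+2)$ and the various $j$-offsets. A secondary subtlety is confirming that the permissibility hypothesis on $j$ (via Proposition~\ref{PropBaseCases}) ensures all the terms appearing are themselves in ranges where the recurrence decomposition is valid, so that no spurious boundary terms are dropped or double-counted; I would check the small cases $n = k+1$ and $n = k+2$ explicitly to make sure the index ranges in the sums are consistent.
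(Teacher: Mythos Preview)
Your proposal is correct and follows essentially the same decomposition as the paper: both condition on the index of the first non-blue edge, split $F_k^0$ by the number of blue degree-$1$ vertices in $E_1$, and then for $l\ge 1$ analyze $E_{l+1}$ according to whether its degree-$1$ vertex is red or (if blue) force the next shared vertex red and look one edge further. The only cosmetic difference is that you package the ``shared vertex forced red'' step via $F_k^*$ and Lemma~\ref{l:F-star-relationship}, whereas the paper unwinds that case by hand; also, your parenthetical second-guess about $2l+2$ is the slip --- with your labeling $E_{l+1}=\{2l+1,2l+2,2l+3\}$ the middle vertex $2l+2$ really is the degree-$1$ vertex and $2l+3$ is the one shared with $E_{l+2}$, so your original instinct was right.
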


\begin{proof}
Note that $F_k(3,n,j)=\displaystyle{\sum_{i=0}^{k-1}}F^i_k(3,n,j).$ To calculate $F_k^0(3,n,j)$, we assume that  the first edge is not monochromatic blue, and consider the vertices in that edge. One option is to select a single vertex of degree 1 to color blue, giving $2F_k(3,n-1,j-1)$ ways to color our hyperpath. Alternatively,  we could pick none of the vertices of degree 1 in $E_1$ to color blue, in which case we have $F_k(3,n-1,j)$ ways to color our hyperpath. Finally, we could pick both degree 1 vertices from $E_1$ to color blue in which case, since the edge is not monochromatic blue, the vertex of degree 2 in $E_1$ must be colored red. Thus let us consider the vertices in $E_2.$ Since $V_3$ is colored red then we can either choose to color the unique degree 1 vertex of $E_2$ blue or not leaving us a total of $F_k(3,n-2,j-2)+F_k(3,n-2,j-3)$ ways to color the remaining hyperedges. From these observations we see that:

\begin{align*}
F_k^0(3,n,j) &=2F_k(3,n-1,j-1)+F_k(3,n-1,j) \\
& +F_k(3,n-2,j-2)+F_k(3,n-2,j-3).
\end{align*}

We now turn our attention to $F^i_k(3,n,j)$ for $i>0$. Assume that the first $i$ edges are monochromatic blue while the $(i+1)$ edge is not. The vertices of this edge are $V_{i(2)+1},V_{i(2)+2},V_{i(2)+3},$ where $E_l\cap E_{i+1}=V_{i(2)+1}, E_{i+1}\cap E_{i+2}=V_{i(2)+3}.$ We note then that $V_{i(2)+1}$ must be colored blue. If $V_{i(2)+2}$ is colored red then we have a total of $F_k(3,n-(i+1),j-2i-1)$ ways to color the remaining edges, otherwise if $V_{i(2)+2}$ is colored blue then $V_{i(2)+3}$ must be colored red. In this case we will consider the color of the degree 1 vertex $V_{i(2)+4}.$ If this vertex is colored red then we have $F_k(3,n-(i+2),j-2(i+1))$ ways to color the remaining edges. Otherwise if the vertex is colored blue then we have $F_k(3,n-(i+2),j-2(i+1)-1)$ to color the remaining edges. We see then that:

\begin{align*}
F^i_k(3,n,j) &= F_k(3,n-(i+1),j-2i-1)+F_k(3,n-(i+2),j-2(i+1))\\
&+ F_k(3,n-(i+2),j-2(i+1)-1). 
\end{align*}

\end{proof}

Having walked through the proof of Theorem \ref{k-path-3-uniform}, we will now see that the general case is very similar.  

\begin{theorem}\label{r-uniform-k-paths}
Let $k>1,r>3,n>k$, $M=(n(r-1)+1)$, and $N=((k-1)(r-1)+2(r-2)+1)$, with $j$ permissible

$$F_k(r,n,j) =\sum_{i=0}^{r-2}\binom{r-2}{i}F_k(r,n-2,j-i-(r-1))$$
$$+\sum_{i=0}^{r-2}\binom{r-1}{i} F_k(r,n-1,j-i)$$
$$+\sum_{l=1}^{k-1}\sum_{i=0}^{r-3}\binom{r-2}{i}F_k(r,n-(i+1),j-l(r-1)-1-i)$$
$$+\sum_{l=1}^{k-1}\sum_{i=0}^{r-2}\binom{r-2}{i}F_k(r,n-(i+2), j-(i+1)(r-1)-i)$$
\end{theorem}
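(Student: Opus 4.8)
The plan is to imitate the proof of Theorem~\ref{k-path-3-uniform} and classify the colorings counted by $F_k(r,n,j)$ by the length $l$ of the maximal initial block of monochromatic blue edges. Since a valid coloring avoids a blue sub-hyperpath of length $k$, this block has length $l$ with $0\le l\le k-1$, and because $n>k$ the edge $E_{l+1}$ always exists and is not monochromatic blue. Writing $F_k^l$ for the number of such colorings whose initial block of monochromatic blue edges has length exactly $l$ (so the first $l$ edges are blue and $E_{l+1}$ is not, in the notation of the excerpt), we get $F_k(r,n,j)=\sum_{l=0}^{k-1}F_k^l(r,n,j)$; it then remains to evaluate each $F_k^l$, and adding the results will produce the four displayed sums, with the run-length $l$ (the outer index) appearing in the path-length arguments $n-(l+1)$, $n-(l+2)$ of the last two.

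I would treat $l=0$ first. Here $E_1$ is not monochromatic blue; of its $r$ vertices, $r-1$ are leaves and one vertex $v$ is shared with $E_2$. If at most $r-2$ of the leaves of $E_1$ are blue, then $E_1$ is automatically non-blue, $v$ is unconstrained, and deleting $E_1$ together with its leaves leaves a loose $r$-path on $n-1$ edges in which $v$ is a leaf of the new first edge; summing over the number of blue leaves gives $\sum_{b=0}^{r-2}\binom{r-1}{b}F_k(r,n-1,j-b)$. If all $r-1$ leaves of $E_1$ are blue, then $v$ must be red, hence $v$ is a fixed red leaf of the first edge of a loose $r$-path on $n-1$ edges, and this case contributes $F_k^*(r,n-1,j-(r-1))$, which by Lemma~\ref{l:F-star-relationship} equals $\sum_{b=0}^{r-2}\binom{r-2}{b}F_k(r,n-2,j-(r-1)-b)$. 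Adding the two subcases produces the first two lines of the claimed identity.

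For $1\le l\le k-1$ the argument is the same in spirit. The edges $E_1,\dots,E_l$, being blue, force exactly $l(r-1)+1$ blue vertices, one of which is the vertex shared by $E_l$ and $E_{l+1}$; the uncolored vertices of $E_{l+1}$ are its $r-2$ leaves together with the vertex $w$ shared with $E_{l+2}$. If some leaf of $E_{l+1}$ is red, then $E_{l+1}$ is already non-blue, $w$ is unconstrained, the remainder is a loose $r$-path on $n-(l+1)$ edges, and summing over the number $b\le r-3$ of blue leaves of $E_{l+1}$ gives $\sum_{b=0}^{r-3}\binom{r-2}{b}F_k(r,n-(l+1),j-l(r-1)-1-b)$. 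If instead all $r-2$ leaves of $E_{l+1}$ are blue, then $w$ must be red, the blue count used so far is $l(r-1)+1+(r-2)=(l+1)(r-1)$, and $w$ is a fixed red leaf of a loose $r$-path on $n-(l+1)$ edges, so this subcase contributes $F_k^*(r,n-(l+1),j-(l+1)(r-1))=\sum_{b=0}^{r-2}\binom{r-2}{b}F_k(r,n-(l+2),j-(l+1)(r-1)-b)$. Summing both subcases over $l=1,\dots,k-1$ and adding the $l=0$ terms yields the four displayed sums. (Specializing to $r=3$ recovers Theorem~\ref{k-path-3-uniform}, so assuming $r>3$ loses nothing.)

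I expect the main obstacle to be not a single hard estimate but the bookkeeping: one must check, in each subcase, that the leftover hypergraph really is a loose $r$-path of the stated length and that avoiding a monochromatic blue sub-hyperpath of length $k$ for the whole coloring is equivalent to avoiding one for that leftover path. The latter holds because the edge just declared non-blue ($E_1$ when $l=0$, and $E_{l+1}$ otherwise) cannot belong to any monochromatic blue sub-hyperpath, so it severs the path into the fixed blue prefix of length $l<k$ (harmless) and an independent remainder; Lemma~\ref{l:F-star-relationship} is exactly what absorbs the single pre-colored red leaf that arises in two of the subcases. One must also dispatch the extreme index $l=k-1$ and the smallest path lengths $n-(l+1)$ and $n-(l+2)$ that appear, where the hypothesis $n>k$ and permissibility of $j$, together with Proposition~\ref{PropBaseCases}, guarantee the boundary terms are the correct counts (in particular $n>k$ ensures $E_{l+1}$ and $E_{l+2}$ exist). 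The auxiliary quantities $M=n(r-1)+1$ and $N=(k-1)(r-1)+2(r-2)+1$ record, respectively, the total number of vertices and the largest blue count that can occur near the end of the recursion, and serve only to keep this boundary bookkeeping transparent.
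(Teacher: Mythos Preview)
Your proposal is correct and follows essentially the same approach as the paper: both partition by the length $l$ of the maximal initial run of blue edges, then case-split on how many degree-$1$ vertices of the first non-blue edge are blue, invoking the $F_k^*$ reduction (Lemma~\ref{l:F-star-relationship}) when the forced red vertex becomes a leaf of the residual path. The only cosmetic difference is that you cite Lemma~\ref{l:F-star-relationship} explicitly in both subcases, whereas the paper re-derives its content inline; you also correctly observe that the path-length arguments in the last two displayed sums should read $n-(l+1)$ and $n-(l+2)$ rather than $n-(i+1)$ and $n-(i+2)$.
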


\begin{proof}
Note that $F_k(r,n,j)=\sum_{i=0}^{k-1}F_k^i(r,n,j)=F_k^0(r,n,j)+\sum_{l=1}^{k-1}F_k^l(r,n,j).$  

\noindent Note that $F_k^0(r,n,j)$ is of concern if and only if the first edge in the hypergraph is not monochromatic blue. In this edge we can pick $i\leq r-2$ vertices of degree 1 from edge 1 to color blue. If this happens we have $j-i$ vertices remaining to color red in $n-1$ edges with no additional restrictions so we have  $F_k(r,n-1,j-i)$ ways to color the remaining edges of the graph. We also have $\binom{r-1}{i}$ ways to pick the vertices of degree 1. From these observations we find that

$$F_k^0(r,n,j)=x+\sum_{i=0}^{r-2}\binom{r-1}{i} F_k(r,n-1,j-i),$$

where $x$ is the number of ways to color the remaining hyperedges of the path given that all the vertices of degree 1 are colored blue. 

To explore $x$ consider the path $\Gamma$ consisting of edges $E_2,E_3,...,E_n$. Relabel the vertices in this graph $V_1,V_2,V_3...$ such that $V_1\in E_1\cap E_2$. Then $V_1$ must be colored  red. Thus $E_2$ contains $r-2$ vertices of degree 1 that may be colored either blue or red. Let us pick $i$ of these and color them blue, then we have $F_k(r,n-2,j-i-(r-1))$ ways to color the remaining edges. We have exactly $\binom{r-2}{i}$ ways to pick these $i$ vertices thus we see that $x=\sum_{i=0}^{r-2}\binom{r-2}{i}F_k(r,n-2,j-i-(r-1))$.

From this last statement we find that

$$F_k^0(r,n,j)=\sum_{i=0}^{r-2}\binom{r-2}{i}F_k(r,n-2,j-i-(r-1))+\sum_{i=0}^{r-2}\binom{r-1}{i} F_k(r,n-1,j-i).$$

%Could we possibly use a variable besides l (``ell'')? In the paper, 1 (``ell'') and 1(``one'') look similar, and they are identical in the .tex

Now let $l>0,$ and assume that edges $E_1,E_2,...,E_l$ are monochromatically colored blue and that $E_{l+1}$ is not a monochromatic blue edge. Then we see we have used $l(r-1)+1$ blue vertices in the first $l$ edges and so we are left with $j-l(r-1)-1$ blue vertices to place in the graph formed by edges $E_{l+1},E_{l+2},...,E_n.$ Let us rename the vertices in $E_{l+1}$ as $V_1,V_2,...,V_r$ such that $V_1\in E_l \cap E_{l+1}.$ Then $V_1$ must be colored blue. We may choose $m\leq r-3$ vertices to color blue and so we see that we have $F_k(r,n-l-1,j-l(r-1)-1-i)$ ways to color the remaining edges.

If, however, we color all $r-2$ vertices of degree 1 in $E_{l+1}$ blue, then vertex $V_r$ must be colored red (since otherwise $E_{l+1}$ would be monochromatic blue). We have therefor used $(l+1)(r-1)$ vertices blue and so we have $j-(l+1)(r-1)$ left to place in the remaining $n-l-1$ edges. Let us then consider how we might color the edges $E_{l+2},E_{l+3},...,E_n$  Once again, let us rename vertex $V_1\in E_{l+1}\cap E_{l+2}.$ Then $V_1$ is red thus we may pick $m\leq r-2$ of the vertices of degree 1 to color blue with no fear of the edge becoming monochromatic blue. We will have have exactly $F_k(r,n-(l+1), j-(l+1)(r-1)-m)$ ways to color the remaining hyperedges. For each permissible value of $m$ we have $\binom{r-2}{m}$ ways to pick the vertices of degree 1 in edge $E_{l+2}.$ Thus we see that 

$$F_k^l(r,n,j)=\sum_{i=0}^{r-3}\binom{r-2}{i}F_k(r,n-(l+1),j-l(r-1)-1-i)$$
$$+\sum_{i=0}^{r-2}\binom{r-2}{i}F_k(r,n-(l+2), j-l(r-1)-r-3-i),$$

\noindent thus proving the desired result.
\end{proof}

We will re-state a fact just proven in Theorem~\ref{r-uniform-k-paths} for future use. 

\begin{proposition}
Let $k>1,r>3,n>k$, $M=(n(r-1)+1)$, and $N=((k-1)(r-1)+2(r-2)+1).$ Assume 
$j$ is permissible then

$$F_k^l(r,n,j)=\sum_{i=0}^{r-3}\binom{r-2}{i}F_k(r,n-l,j-l(r-1)-1-i)$$
$$+\sum_{i=0}^{r-2}\binom{r-2}{i}F_k(r,n-(l+1), j-(l+1)(r-1)-i).$$
\end{proposition}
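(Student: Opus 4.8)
The plan is to extract, as a stand-alone statement, the computation of $F_k^l(r,n,j)$ performed inside the proof of Theorem~\ref{r-uniform-k-paths}; the identity asserted here is precisely the expression obtained there for $F_k^l$, up to relabelling of the summation index, so the proposition is immediate from that proof. For clarity we re-run the relevant case analysis.

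Recall first that $F_k^l(r,n,j)$ counts the colourings in which $E_1,\dots,E_l$ are monochromatic blue while $E_{l+1}$ is not. Since $l\le k-1$, the block $E_1,\dots,E_l$ is a blue subpath that is too short to be forbidden, and since $E_{l+1}$ is not blue no forbidden blue subpath can meet any of $E_1,\dots,E_{l+1}$; moreover the first $l$ edges already contain exactly $l(r-1)+1$ blue vertices. The next step is to write $E_{l+1}=\{V_1,\dots,V_r\}$ with $V_1\in E_l\cap E_{l+1}$ (hence blue) and $V_r\in E_{l+1}\cap E_{l+2}$, and to split according to how many of the $r-2$ degree-one vertices $V_2,\dots,V_{r-1}$ are coloured blue. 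If at most $r-3$ of them are blue, then $E_{l+1}$ is automatically non-monochromatic regardless of the colour of $V_r$, so choosing $i$ such vertices in $\binom{r-2}{i}$ ways leaves an unrestricted loose sub-$r$-path on the remaining edges with a determined count of blue vertices still to place; summing over $i$ produces the first summation. If instead all of $V_2,\dots,V_{r-1}$ are blue, then $V_r$ must be red (otherwise $E_{l+1}$ would be blue), and what remains is a loose sub-$r$-path whose distinguished shared vertex is forced red; this is an $F_k^{*}$-type count, and applying Lemma~\ref{l:F-star-relationship} to it rewrites it as the second summation. Adding the two cases gives the stated formula.

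The step to watch, and the only one that is more than mechanical, is the bookkeeping: in each case one must correctly tally the blue vertices already used, the number of hyperedges left in the reduced path, and---most importantly---justify that once $E_{l+1}$ is known to be non-monochromatic, the colouring of $E_1,\dots,E_{l+1}$ no longer constrains the no-length-$k$-blue-subpath condition on $E_{l+2},\dots,E_n$, so that the residual count is a bona fide $F_k$ (respectively $F_k^{*}$) with no carried-over restriction. This decoupling is exactly where the hypotheses $l\le k-1$ and ``$E_{l+1}$ not blue'' are used; everything else is index arithmetic.
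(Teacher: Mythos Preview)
Your proof is correct and takes essentially the same approach as the paper: the paper's own ``proof'' of this proposition is simply the remark that it restates the computation of $F_k^l(r,n,j)$ carried out inside the proof of Theorem~\ref{r-uniform-k-paths}, and you reproduce exactly that case analysis. The only cosmetic difference is that you invoke Lemma~\ref{l:F-star-relationship} to handle the subcase where all degree-one vertices of $E_{l+1}$ are blue, whereas the paper inlines that one-step argument directly.
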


\section{Applications }

We begin this section with two theorems that tie together colorings for tight paths and cycles to colorings of 2-paths and 2-cycles (i.e. the usual case of graphs). 

\begin{definition}
The number of ways we can color exactly $j$ vertices blue in a $(r-1)$-tight $r$-path with $n$ edges while avoiding a monochromatic blue subpath of length $k$ is denoted by $T_k(r,n,j).$
\end{definition}

\begin{theorem}
Let $k>r,$ then 
$$T_k(r,n,j)=F_{r+k-2}(2,r+n-2,j)$$
\end{theorem}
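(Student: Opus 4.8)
The plan is to set up an explicit bijection between the colorings counted by $T_k(r,n,j)$ and those counted by $F_{r+k-2}(2,r+n-2,j)$. The starting observation is structural: an $(r-1)$-tight $r$-path with $n$ edges has exactly $r + n - 2$ vertices (the first edge supplies $r$ of them, and each of the remaining $n-1$ edges introduces exactly one new vertex), and its edges are precisely the $n$ sets of $r$ consecutive vertices $\{i, i+1, \ldots, i+r-1\}$ for $0 \le i \le n-1$. A $2$-path on $r+n-2$ vertices has the same vertex set, laid out in a line. So I would use the identity map on vertices, carrying over each blue/red coloring verbatim; the whole content of the theorem is then to check that ``no monochromatic blue tight $r$-subpath of length $k$ in the $(r-1)$-tight picture'' corresponds exactly to ``no monochromatic blue $2$-subpath of length $r+k-2$ in the linear picture.''

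The key combinatorial fact to isolate and prove is this: a set of $k$ consecutive edges of the tight $r$-path, together occupying vertices $\{i, i+1, \ldots, i + r + k - 2\}$, is all-blue if and only if those $r + k - 1$ consecutive vertices are all blue. The ``only if'' direction is immediate since every such vertex lies in one of the $k$ edges. For the ``if'' direction, if all $r+k-1$ of those vertices are blue then in particular each window of $r$ consecutive ones among them is blue, so each of the $k$ edges is monochromatic blue. Thus a monochromatic blue tight $r$-subpath of length $k$ exists precisely when there is a run of $r + k - 1$ consecutive blue vertices — and a run of $\ell$ consecutive blue vertices in a $2$-path is exactly a monochromatic blue $2$-subpath of length $\ell - 1$. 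Setting $\ell - 1 = r + k - 2$, i.e.\ $\ell = r + k - 1$, we get that the forbidden configurations match up exactly, and since the coloring and the count of blue vertices $j$ are preserved by the identity map, the two counts are equal. I would also note where the hypothesis $k > r$ is used: presumably to guarantee that $k$ consecutive edges genuinely have the ``length'' bookkeeping work out and that the sub-$r$-path of length $k$ is a well-defined notion inside a tight $r$-path of length $n$ (one needs $n$ large enough and $k$ in the right range for the recursive framework of the earlier sections to apply), so I would restate that restriction and check the boundary edge-counts are consistent.

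The main obstacle I anticipate is purely definitional rather than deep: pinning down exactly what ``monochromatic blue subpath of length $k$'' means for a tight $r$-path, since a priori one might mean any sub-hypergraph isomorphic to a tight $r$-path of length $k$, not necessarily a contiguous block of edges. I would argue (or cite the convention implicit in the definition of $F_k$ and $T_k$ in the excerpt) that inside an $(r-1)$-tight $r$-path every sub-$(r-1)$-tight-$r$-path of length $k$ is automatically a contiguous run of $k$ edges, because the edges are totally ordered by the shift structure $E_{i+1} = \{v + (r-1) : v \in E_i\}$ and any $k$ edges forming a tight subpath must be consecutive in this order. Once that rigidity statement is in hand, the vertex-count arithmetic ($n$ edges $\leftrightarrow$ $n + r - 2$ vertices, $k$ edges $\leftrightarrow$ $k + r - 1$ vertices, run length $k + r - 2$ in the $2$-path) closes the argument. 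I would present the proof as: (1) identify vertex sets and the identity bijection; (2) prove the rigidity/contiguity lemma for tight subpaths; (3) prove the run-length equivalence displayed above; (4) conclude that the bijection respects both the constraint and the parameter $j$, giving the claimed equality of counts.
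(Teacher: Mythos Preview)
Your approach is exactly the paper's: its entire proof is the single sentence ``any $r+k-1$ blue successive vertices create a subpath of length $k$,'' and your identity-on-vertices bijection together with the run-length equivalence is precisely a fleshed-out version of that observation. Two small bookkeeping slips to fix (they cancel, so the argument is unaffected): an $(r-1)$-tight $r$-path on $n$ edges has $r+n-1$ vertices, not $r+n-2$, which matches the $r+n-1$ vertices of a $2$-path with $r+n-2$ \emph{edges}; and the edge shift should be $E_{i+1}=\{v+1:v\in E_i\}$ (overlap $r-1$) rather than $v+(r-1)$.
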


\begin{proof}
Note that any $r+k-1$ blue successive vertices create a subpath of length k.  
\end{proof}

We now define several quantities related to colorings in hypergraph cycles. 

\begin{definition}
The number of ways we can color exactly $j$ vertices blue in a loose $r$-cycle with $n$ edges while avoiding a monochromatic blue subpath of length $k$ is denoted by $C_k(r,n,j)$
\end{definition}

\begin{definition}
The number of ways we can color exactly $j$ vertices blue in a loose $r$-cycle with $n$ edges while avoiding a monochromatic blue subpath of length $k$ with the extra restriction that the edge $E_n$  is  blue is denoted by $C_k^{b}(r,n,j)$
\end{definition}

\begin{definition}
The number of ways to color an $(r-1)-$tight $r$-cycle on $n$ edges with exactly $j$ blue vertices while avoiding a blue monochromatic subpath of length $k$ is $TC_k(r,n,j)$. 
\end{definition}

\begin{theorem}
For any integers $k,r,j > 2$, $$TC_k(r,n,j)=C_k(2,r+n-2,j).$$
\end{theorem}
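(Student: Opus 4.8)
The plan is to follow the same route as the one‑line proof of the analogous identity for $(r-1)$‑tight $r$‑paths proved just above: I would show that both $TC_k(r,n,j)$ and $C_k(2,r+n-2,j)$ count colorings of one and the same cyclically ordered set of vertices in which a fixed‑length block of consecutive blue vertices is forbidden, so the two numbers coincide essentially by definition.

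First I would make the cyclic vertex order of an $(r-1)$‑tight $r$‑cycle $P$ explicit. Reading off the edge‑by‑edge construction in the definition, the vertices of $P$ can be listed cyclically as $v_0, v_1, \dots, v_{N-1}$ in such a way that each edge $E_i$ is a block of $r$ cyclically consecutive vertices $\{v_i, v_{i+1}, \dots, v_{i+r-1}\}$ (subscripts modulo $N$), the $r-1$ closing edges being exactly the blocks that wrap past $v_{N-1}$ back to $v_0$; here $N$ is the vertex count of $P$, which I would read off directly from the same construction.

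Second I would translate the forbidden configuration into this language. A monochromatic blue subpath of length $k$ in $P$ is a union of $k$ consecutive edges $E_i \cup E_{i+1} \cup \dots \cup E_{i+k-1}$, which by the first step is precisely a block of $k+r-1$ cyclically consecutive vertices, and conversely any block of $k+r-1$ consecutive blue vertices contains such a subpath. Hence $TC_k(r,n,j)$ is the number of ways to color $j$ of the $N$ cyclically ordered vertices blue with no $k+r-1$ of them consecutive. On the other side, in a loose $2$‑cycle on $m$ edges (an ordinary $m$‑vertex cycle) a monochromatic blue subpath of length $\ell$ is a block of $\ell+1$ consecutive blue vertices, so $C_{\ell}(2,m,j)$ counts colorings of $m$ cyclically ordered vertices with $j$ blue and no $\ell+1$ consecutive. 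Matching the two descriptions — the vertex count $N$ against the second argument of $C$, and the forbidden‑run length $k+r-1$ against the subpath‑length parameter — then yields the identity.

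I expect the only real work, and hence the main obstacle, to be that first step: the careful vertex‑and‑edge accounting for the $(r-1)$‑tight $r$‑cycle, in particular around the $r-1$ degenerate closing edges, needed to pin down $N$ and the exact length of the forbidden run; once that is nailed down the remaining steps are the same structural one‑liners used in the tight‑path case. As a fallback, if a direct structural identification turns out to be awkward, one can instead cut the cycle at a fixed edge and sum over the length of the maximal blue run straddling the cut, expressing both $TC_k$ and $C_k$ as the same linear combination of path counts and then invoking the already‑established correspondence between $T_k$ and $F$ for $(r-1)$‑tight $r$‑paths.
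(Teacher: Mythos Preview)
Your plan is exactly the paper's: its entire proof is the single sentence ``Note that any consecutive string of $k$ vertices produces an $(r-1)$-tight path,'' i.e., precisely the structural identification you describe, matching the $(r-1)$-tight $r$-cycle with an ordinary $2$-cycle on the same cyclically ordered vertex set so that a blue tight subpath becomes a run of consecutive blue vertices. You are also right that the only real content is the vertex- and run-length bookkeeping the paper suppresses; when you carry it out you will get a forbidden run of $k+r-1$ consecutive vertices on an $n$-vertex cycle, paralleling the path identity $T_k(r,n,j)=F_{r+k-2}(2,r+n-2,j)$, so do not be alarmed if the indices you compute differ from the displayed $C_k(2,r+n-2,j)$---that is a slip in the statement, not in your argument.
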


\begin{proof}
Note that any consecutive string of $k$ vertices produces an $(r-1)-$tight path. 
\end{proof}

We now turn our attention to the problem of determining coloring numbers for loose cycles.

\begin{definition}
The number of ways we can color exactly $j$ vertices blue in a loose $r$-cycle with $n$ edges while avoiding a monochromatic blue subpath of length $k$ with the extra restriction that the edge $E_n$  is not blue is denoted by $C_k^{nb}(r,n,j)$
\end{definition}

\begin{theorem}
For positive integers $n$ and $j$ with $n \geq 4$, $$C_k(2,n,j)=F_k(2,n-2,j)+\sum_{l=0}^{k-1}(l+1)F_k(2,n-l-4,j-l-1).$$
\end{theorem}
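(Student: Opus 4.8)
The plan is to classify colorings of a loose $2$-cycle $C$ on $n$ edges according to the status of the distinguished edge $E_n$, exactly as the earlier results classify path colorings by the first non-monochromatic-blue edge. Since $C$ is a cycle, we first split into $C_k(2,n,j) = C_k^{nb}(2,n,j) + C_k^{b}(2,n,j)$ using the two definitions just introduced. The term $C_k^{nb}(2,n,j)$ should be the easy one: if $E_n$ is not monochromatic blue, then deleting $E_n$ and the (unique) degree-$1$ vertex it contributes leaves a loose $2$-path on $n-2$ edges, and a coloring of $C$ with $E_n$ not blue corresponds bijectively to a coloring of that path avoiding a blue $k$-subpath — so $C_k^{nb}(2,n,j) = F_k(2,n-2,j)$. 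Here I would need to check that no blue $k$-subpath can "wrap around" through $E_n$ once $E_n$ is excluded as monochromatic blue, which is immediate because a blue subpath requires its constituent edges all blue.

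Next I would handle $C_k^{b}(2,n,j)$, the case $E_n$ blue. Because the whole cycle has no blue $k$-subpath, the maximal run of consecutive blue edges containing $E_n$ has some length $l+1$ with $0 \le l \le k-1$; say this run is $E_{n-l}, E_{n-l+1}, \dots, E_n$ (indices mod $n$), and the two edges flanking this run, namely $E_{n-l-1}$ and $E_{l+1}$, are each non-blue. Counting blue vertices: a run of $l+1$ consecutive edges in a loose $2$-path/cycle uses $l+2$ vertices, all of which are blue here, and the two endpoint vertices of this run are shared with the flanking non-blue edges and forced blue there. Removing the $l+1$ blue edges together with their interior vertices, and accounting for the forced-blue endpoint vertices and the forced-red vertices just outside them, leaves a loose $2$-path on $n-l-4$ edges to be colored with $j-(l+2)$ blue vertices — wait, I should recount: the run contributes $l+2$ blue vertices, but the two endpoints are also counted as vertices of the flanking edges, so after excising the run and its two flanking "collapsed" edges we have consumed $l+4$ edges and the blue count drops by $l+1$ (since one forced-blue vertex on each side is, in the bookkeeping of the remaining path of $n-l-4$ edges, replaced by a forced-red neighbor). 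This gives a contribution $F_k(2,n-l-4,j-l-1)$ for each such run.

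The factor $(l+1)$ in front comes from the cyclic freedom in placing the run: on a cycle there are $n$ rotational positions for a run of fixed length through $E_n$, but fixing that $E_n$ itself is blue, the run $E_{n-l}, \dots, E_n$ may start at any of the positions $E_n, E_{n-1}, \dots, E_{n-l}$ as its "last" edge is pinned but its "first" edge ranges over $l+1$ choices — so there are $l+1$ distinct runs of length $l+1$ containing the fixed edge $E_n$. Summing over $l$ from $0$ to $k-1$ yields $\sum_{l=0}^{k-1}(l+1)F_k(2,n-l-4,j-l-1)$, and adding the $C_k^{nb}$ term gives the claimed formula. The main obstacle I anticipate is the careful bookkeeping in the $E_n$-blue case: precisely tracking which shared vertices are forced blue, which neighbors are forced red, how many vertices a run of $l+1$ loose edges spans, and hence the exact arguments $n-l-4$ and $j-l-1$ of $F_k$ — together with justifying the multiplicity $l+1$ without double-counting colorings (different choices of the maximal blue run through $E_n$ genuinely give disjoint coloring classes because the run is maximal, hence determined by the coloring). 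The condition $n \ge 4$ is exactly what makes $n-l-4 \ge 0$ fail gracefully (handled by the $\binom{n}{k}=0$ convention for negative arguments inherited by $F_k$) in the extreme case $l = k-1$ when $n$ is small relative to $k$.
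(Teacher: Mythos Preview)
Your decomposition into $C_k^{nb}+C_k^{b}$ is the one the paper uses for general $r$, but for $r=2$ it does \emph{not} yield the two displayed pieces of the formula, and both of your identifications are wrong. The first slip is concrete: in a $2$-cycle every vertex has degree $2$, so the edge $E_n$ has \emph{no} degree-$1$ vertex to delete. Removing only the edge $E_n$ leaves a path on $n-1$ edges (not $n-2$), and the condition ``$E_n$ not blue'' becomes ``not both endpoints of $E_n$ are blue,'' which is \emph{not} a bijection to unconstrained colorings of a path on $n-2$ edges. A quick check with $k=1$, $n=4$ already breaks it: since no edge can be blue at all, $C_1^{nb}(2,4,j)=C_1(2,4,j)=1,4,2$ for $j=0,1,2$, whereas $F_1(2,2,j)=1,3,1$. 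Your $C_k^{b}$ count is off for the same example (it should be identically $0$ when $k=1$, but your sum gives $F_1(2,0,j-1)$). In fact the correct edge-based split is
\[
C_k^{nb}(2,n,j)=F_k(2,n-2,j)+\sum_{l=0}^{k-1}F_k(2,n-l-4,j-l-1),\qquad
C_k^{b}(2,n,j)=\sum_{l=1}^{k-1}l\,F_k(2,n-l-4,j-l-1),
\]
and only their \emph{sum} collapses to the stated formula; your argument assigns the wrong quantity to each piece.

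The paper avoids all of this by conditioning on a \emph{vertex} rather than an edge: fix a vertex $V_n$ and split on its color. If $V_n$ is red, deleting it (and its two incident edges) genuinely leaves a path on $n-2$ edges with $j$ blue vertices, giving the term $F_k(2,n-2,j)$ cleanly. If $V_n$ is blue, look at the maximal run of consecutive blue \emph{vertices} containing it; this run has $l+1$ vertices for some $0\le l\le k-1$, the two neighbours just outside it are forced red, and excising these $l+3$ determined vertices leaves a path on $n-l-4$ edges with $j-l-1$ blue vertices. The factor $(l+1)$ is the number of positions $V_n$ can occupy within a run of $l+1$ vertices. This vertex-based split is what actually produces the two terms of the formula directly; your edge-based attempt conflates it with the $C_k^{nb}/C_k^{b}$ machinery, which for $r=2$ carves the count along a different line.
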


\begin{proof}
Assume that we have a permissible coloring on a cycle of size $n$. Then we have two possibilities:
\begin{enumerate}
    \item The vertex $V_n$ is red, in which case its removal gives us a permissible path coloring; on the other hand, given a permissible coloring of a path of length $(n-2),$ we can add a red vertex $V_n$ to create a permissible coloring on a cycle.
    \item If $V_n$ is blue, and lies on a maximal monochromatic subpath of length $l\geq 0.$ Then  the removal of all the vertices in this subpath leaves us with a permissible coloring on $n-l-1$ vertices, where both the first at last vertex are red. Thus there are $F_k(r,n-l-4,j-l-1)$ ways to color the remaining vertices. Note that we assumed that $V_n$ was part of a vertex on a monochromatic subpath of length $l,$ however, $V_n$ could have been in any of the $l+1$ positions for a vertex on that subpath.
\end{enumerate}
\end{proof}

Through the same argument we see that for any $r$ the following holds.

\begin{theorem}\label{cycles}
Let $n>k+2$, $r>2$ \ \\  $M=(n(r-1)+1)$, and $N=((k-1)(r-1)+2(r-2)+1)$,

 with $j<[M/N](N-1)+_NR_M.$  Then
$$C_k(r,n,j)=C^{nb}_k(r,n,j)+C^b_k(r,n,j)$$

\end{theorem}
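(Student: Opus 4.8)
The statement of Theorem~\ref{cycles} is essentially a definitional decomposition: every permissible coloring of a loose $r$-cycle on $n$ edges with $j$ blue vertices either has its distinguished edge $E_n$ monochromatic blue or it does not, and these two cases are mutually exclusive and exhaustive. So the plan is to observe that the sets counted by $C^b_k(r,n,j)$ and $C^{nb}_k(r,n,j)$ partition the set counted by $C_k(r,n,j)$, giving the additive identity immediately. The only real content to verify is that the hypothesis $j<[M/N](N-1)+{}_NR_M$ (together with $n>k+2$, $r>2$) places us in a regime where all three quantities are well-defined and where no degenerate phenomenon collapses one of the summands; this is the analogue, for cycles, of the permissibility condition used throughout Section~3 for paths, with $M=n(r-1)+1$ the total number of vertices in the cycle and $N$ the threshold appearing in Proposition~\ref{PropBaseCases}(2).

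Concretely, I would proceed as follows. First I would recall the definitions of $C_k(r,n,j)$, $C^b_k(r,n,j)$, and $C^{nb}_k(r,n,j)$ from the three preceding definitions, and note that a fixed loose $r$-cycle has a well-defined last edge $E_n$ (fixing a cyclic labeling of the edges), so ``$E_n$ is blue'' is a genuine binary condition on any coloring. Second, for any permissible coloring $c$ counted by $C_k(r,n,j)$, exactly one of the two statements ``$E_n$ is monochromatic blue under $c$'' and ``$E_n$ is not monochromatic blue under $c$'' holds; hence the map $c\mapsto c$ sorts the colorings counted by $C_k(r,n,j)$ into the two disjoint classes counted by $C^b_k(r,n,j)$ and $C^{nb}_k(r,n,j)$. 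Third, I would note that the stated bound on $j$ guarantees $j$ is in the permissible range for the ambient $n$-edge cycle structure (so $C_k(r,n,j)$ is not forced to $0$ by an analogue of the path base cases), and since the decomposition is a partition regardless, summing the two class sizes recovers $C_k(r,n,j)$.

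The one step that deserves a sentence of care — and the only place a reader might object — is the claim that ``$E_n$ blue'' versus ``$E_n$ not blue'' really does partition \emph{permissible} colorings without either class secretly being empty in a way that makes the equation vacuous or ill-posed. This is where the hypothesis $n>k+2$ matters: it ensures the cycle is long enough that both a coloring with $E_n$ blue (e.g.\ color $E_n$ and nothing else, which needs $k>1$ to avoid a length-$k$ blue subpath and $j\ge r$) and one with $E_n$ red can occur, and it ensures that deleting an edge from the cycle leaves a loose $r$-path of length $n-1>k+1$ to which the earlier path results legitimately apply. I expect the main obstacle to be purely expository: stating precisely enough what ``the edge $E_n$'' means in a loose $r$-cycle (since the definition of loose $r$-cycle given earlier does not fix a labeling), so that the two restricted counts are unambiguous; once that labeling convention is pinned down, the proof is the one-line partition argument used verbatim in the $r=2$ theorem immediately preceding, and the phrase ``through the same argument'' in the text is justified.
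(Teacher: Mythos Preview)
Your proposal is correct and matches the paper's approach: the theorem is stated without proof, as an immediate partition of the permissible colorings according to whether the distinguished edge $E_n$ is monochromatic blue or not. One small over-worry to drop: the identity $C_k = C_k^{b} + C_k^{nb}$ holds by definition regardless of whether either summand is zero, so the hypotheses on $n$ and $j$ are not needed to make the partition valid---they are carried along only so that the explicit formulas in Theorems~\ref{cycles1} and~\ref{cycles2} for the two summands will apply.
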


\noindent Of course Theorem \ref{cycles} is of no use without formulas for both $C_k^b(r,n,j)$ and $C_k^{nb}(r,n,j),$ which we now provide via Theorem \ref{cycles1}, and Theorem \ref{cycles2}.

\begin{theorem}\label{cycles1}
Let $n>k+2$, $r>2$ \ \\  $M=(n(r-1)+1)$, and $N=((k-1)(r-1)+2(r-2)+1)$, with

 $j<[M/N](N-1)+_NR_M.$  Then

 $$C^{nb}_k(r,n,j)=\sum_{i=0}^{r-3}\binom{r-2}{i}F_k(r,n-1,j-i)$$

  $$+\sum_{i=0}^{2r-4}\binom{2r-4}{i}F_k(r,n-3,j-i-(r-2))$$
 
 $$+2\sum_{l=1}^{k-1}\sum_{i=0}^{r-2}\binom{r-2}{i}F^l(r,n-2,j-(r-2)-i)$$
 
 $$+2\sum_{b=0}^{r-3}\sum_{a=0}^{r-2}\binom{r-2}{a}\binom{r-2}{b}F(r,n-3,j-(r-2)-1-(a+b))$$
 
$$+2\sum_{i=0}^{2r-4}\binom{2r-4}{i}F_k(r,n-4,j-2(r-2)-1-i). $$

\end{theorem}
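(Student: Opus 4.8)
The plan is to decompose $C^{nb}_k(r,n,j)$ according to the structure of the coloring near the edge $E_n$, exactly mirroring the case analysis used in the proof of Theorem~\ref{r-uniform-k-paths}. Since $E_n$ is assumed not blue, we can "cut" the cycle at $E_n$ and try to reduce to counting colorings of loose $r$-paths, which are governed by the quantities $F_k$ and $F_k^l$ already established. The subtlety compared to the path case is that $E_n$ is simultaneously adjacent to $E_{n-1}$ and $E_1$, so the two degree-$2$ vertices shared with its neighbors must both be tracked, and runs of blue edges can wrap around past $E_n$ from either side. This is why several of the sums carry a factor of $2$ (the two "ends" of the cut, i.e. proceeding clockwise vs.\ counterclockwise from $E_n$) and why $2r-4$ and $\binom{2r-4}{i}$ appear (a middle edge flanked by two non-blue neighbors has $2(r-2)$ free degree-$1$ vertices, or two adjacent edges between fixed red vertices contribute $2(r-2)$ choices).

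First I would fix the degree-$2$ vertex $u = E_n \cap E_1$ and the degree-$2$ vertex $w = E_{n-1} \cap E_n$, and split on their colors. Because $E_n$ is not monochromatic blue, at least one vertex of $E_n$ is red; I would organize the count by how far the maximal blue run extends on each side. The term $\sum_{i=0}^{r-3}\binom{r-2}{i}F_k(r,n-1,j-i)$ should come from the case where $E_n$ contributes only some of its $r-2$ degree-$1$ vertices as blue (at most $r-3$, so the edge stays non-blue automatically is not quite it — rather one keeps a distinguished red vertex), leaving an $r$-path on $n-1$ edges. The terms with $F^l_k(r,n-2,\dots)$ and the factor $2$ handle the case where a blue run of length $l$ abuts $E_n$ from one of the two sides; applying the re-stated Proposition following Theorem~\ref{r-uniform-k-paths} lets one express these in the stated closed form. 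The terms on $n-3$ and $n-4$ edges with $\binom{2r-4}{i}$ handle the situation where non-blue edges sit on both sides of a short blue segment straddling the cut, so that two edges' worth of degree-$1$ vertices ($2(r-2) = 2r-4$ of them) are freely colorable between two forced-red vertices.

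The key steps, in order: (1) write $C^{nb}_k(r,n,j)$ as a sum over the "type" of the coloring at $E_n$ — whether $E_n$ alone is the relevant non-blue edge, or whether there is a blue run hanging off one side, or blue runs (of total length recorded by $l$) straddling $E_n$; (2) in each type, delete $E_n$ together with the forced-red boundary vertices to obtain a genuine loose $r$-path, being careful to recompute how many blue vertices remain (this is where offsets like $j-2(r-2)-1-i$ come from: two incident edges each lose a degree-$2$ vertex and the shared structure is counted once) and how many edges remain; (3) account for the binomial factors counting which degree-$1$ vertices in the boundary edges are blue, and the factors of $2$ for the two orientations around the cut; (4) collect terms and match against the five summands in the statement, invoking Proposition~\ref{PropBaseCases} and the permissibility hypothesis $j<[M/N](N-1)+{_N}R_M$ to discard degenerate contributions.

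The main obstacle I expect is the bookkeeping in step (2): getting the vertex-count offsets exactly right when a blue run wraps around $E_n$, since the two degree-$2$ vertices $u,w$ are each shared between two edges and one must avoid double-counting blue vertices while also not losing the constraint that the run is \emph{maximal} (forcing the next vertices on each side to be red). Disentangling "a blue run of length $l$ reaching $E_n$ from the left" versus "from the right" versus "wrapping through $E_n$" without overlap — and checking that the wrap-through case is genuinely the source of the $\binom{2r-4}{i}$ terms rather than being absorbed elsewhere — is the delicate combinatorial point; everything else is the same leaf-peeling argument already carried out for paths.
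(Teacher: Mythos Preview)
Your high-level strategy --- cut the cycle at the non-blue edge $E_n$ and reduce to counting path colorings --- is exactly the paper's approach, and your instinct that the factors of $2$ come from the left/right symmetry at $E_n$ is correct. However, the specific case decomposition you sketch does not match the one that actually produces the five displayed sums, and in a couple of places it is wrong rather than merely imprecise.

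The paper's primary split is \emph{not} on whether a blue run abuts or ``straddles'' $E_n$; since $E_n$ is non-blue, no blue run can straddle it at all, so that language is a red herring. The correct first branch is on the number $i$ of blue degree-$1$ vertices in $E_n$: if $i\le r-3$ then $E_n$ already has a red degree-$1$ vertex, the two degree-$2$ vertices of $E_n$ are completely unconstrained, and removing $E_n$ leaves a genuine loose $r$-path on $n-1$ edges with $j-i$ blue vertices to place --- this is the first sum. Only in the remaining case $i=r-2$ do you then branch on the colors of the two degree-$2$ vertices $u\in E_n\cap E_1$ and $w\in E_{n-1}\cap E_n$: both red gives the second sum (the $\binom{2r-4}{i}$ here counts the freely-colorable degree-$1$ vertices of $E_1$ and $E_{n-1}$ together, via Vandermonde, not anything ``straddling''); exactly one blue gives the three doubled sums, further split according to whether the neighboring edge on the blue side is itself blue (the $F_k^l$ term), non-blue with a red degree-$1$ vertex, or non-blue with all degree-$1$ vertices blue (forcing the next shared vertex red and yielding the $n-4$ term).

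So the obstacle you flagged --- bookkeeping the offsets --- is real, but it will not resolve cleanly until you replace the ``blue runs near the cut'' framing with the two-stage split: first the degree-$1$ count in $E_n$, then (only when that count is maximal) the colors of $u$ and $w$.
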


\begin{proof}

Assume that we have a loose cycle of length $n$ such that there is no monochromatic blue subpath of length $k$. Then let $E_n$ be an edge that is not blue.  If $E_n$ has $i<r-2$ blue vertices of degree 1 then we have $F_k(r,n-1,j-i)$ ways to color the remaining edges.   Otherwise, if all $r-2$ vertices are colored blue then we have one of two cases to consider:
\begin{enumerate}
    \item \textbf{Both degree 2 vertices of $E$ are red.} For convenience, let us denote by $E_1$ and $E_{n-1}$ the two edges with noneempty intersection with $E_n$. Then we can pick any combination of $i\leq 2r-4$ degree 1 vertices from $E_1$ and $E_{n-1}$ to color blue and we are left with $F_k(r,n-3,j-(r-2)-i)$ ways to color the remaining vertices.
    \item \textbf{Exactly one of the 2 vertices of degree 2 are blue.} Without loss of generality we use $E_1$ to refer to the edge overlapping with $E_n$ which contains the blue vertex, we use $E_{n-1}$ to refer to the edge overlapping with $E_n$ which contains the red vertex. Furthermore, let us assume that $a\leq r-2$ vertices  of degree 1 from $E_{n-1}$ are colored blue. Then we have the following subcases:
    \begin{enumerate}
        \item $E_1$ is blue, then $E_1$ is part of a blue subpath of length $l\geq 1,$ thus we have $F_k^l(r,n-2,j-(r-2)-a)$ ways to color the remaining vertices.
        \item $E_1$ is not blue, then we are left with two last subcases.
        \begin{itemize}
            \item At least one degree 1 vertex from $E_1$ is red, then if we have $i$ such red vertices we find that we have $F_k(r,n-3,j-(r-2)-1-(i+a))$ ways to color the vertices of the remaining edges.
            \item All degree 1 vertices from $E_1$ are blue which means that the vertex $v\in E_1\cap E_2$ must be red. Thus we can pick  $0\leq b\leq r-2$ vertices of degree 1 from $E_2$ to color blue leaving us with $F_k(r,n-4,j-2(r-2)-1-(a+b))$ ways to color the vertices of the remaining edges. 
        \end{itemize}
    \end{enumerate}
 \end{enumerate}
 
 Of course, we could reverse the assumption that $v_1$ is red and $v_{(n-1)(r-1)+1}$ is blue thus all quantities from case $(2)$ must be doubled.
\end{proof}

\begin{theorem}\label{cycles2}
Let $n>k+2$, $r>2$ \ \\  $M=(n(r-1)+1)$, and $N=((k-1)(r-1)+2(r-2)+1)$ with

 $j<[M/N](N-1)+_NR_M.$  \newpage
 
 Then

 $$C^b_k(r,n,j)$$
$$=\sum_{l=1}^{k-1}\sum_{a=0}^{r-3}\sum_{b=0}^{r-2}l\binom{r-2}{a}\binom{r-2}{b}F_k(r,k-l-2,j-l(r-1)-1-a-b)$$

$$+2\sum_{l=1}^{k-1}\sum_{a=0}^{r-3}\sum_{b=0}^{r-2}l \binom{r-2}{a}\binom{r-2}{b}F_k(r,n-l-3,j-l(r-1)-1-a-b-(r-2))$$

$$+\sum_{l=1}^{k-1}\sum_{a=o}^{2r-4}l \binom{2r-4}{i}F(r,n-l-4,j-l(r-1)-1-2(r-2)-a)$$
\end{theorem}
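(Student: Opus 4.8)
The plan is to mimic the structure of the proof of Theorem~\ref{cycles1}, but now conditioning on the edge $E_n$ being monochromatic blue. Since $E_n$ is blue, it lies on a maximal monochromatic blue subpath of some length $l$ with $1 \le l \le k-1$ (it cannot be longer, else we would already have a forbidden blue subpath of length $k$). First I would fix such an $l$ and observe that, reading around the cycle, this blue subpath uses a contiguous block of $l$ edges containing $E_n$; because $E_n$ can occupy any of the $l$ positions inside this block, every count obtained for a fixed placement must be multiplied by $l$. Deleting the $l$ blue edges together with their degree-$1$ vertices removes $l(r-1)+1$ blue vertices and leaves a loose $r$-path, whose two new end-edges are the edges $E'$ and $E''$ that flanked the blue block. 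Maximality of the block forces each of $E'$ and $E''$ to be non-blue.

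Next I would break into cases on the colours of the degree-$2$ vertices of $E'$ and $E''$ that were \emph{not} absorbed into the deleted block, exactly as in Theorem~\ref{cycles1}: the shared vertices $v' \in E' \cap (\text{blue block})$ and $v'' \in E'' \cap (\text{blue block})$ are blue, but the \emph{other} degree-$2$ vertex of each of $E'$, $E''$ (the one shared with the next edge out) may be red or blue. The three displayed terms should correspond to: (i) the ``interior'' degree-$1$ choices on $E'$ and $E''$ with no further propagation — here $a$ ranges over $0 \le a \le r-3$ for one flank (at least one degree-$1$ vertex red to keep $E'$ non-blue) and $b$ over $0 \le b \le r-2$ for the other, giving $l\binom{r-2}{a}\binom{r-2}{b}F_k(r, n-l-2, j-l(r-1)-1-a-b)$; (ii) the case where exactly one of the two flanks has all its degree-$1$ vertices blue, forcing the next degree-$2$ vertex red and consuming one more edge, hence the shift to $n-l-3$ and $j - \cdots - (r-2)$, doubled by the left/right symmetry; (iii) the case where \emph{both} flanks have all degree-$1$ vertices blue, consuming two more edges on each side, giving the $n-l-4$ term with a single binomial $\binom{2r-4}{a}$ counting the combined degree-$1$ choices on the two outermost edges. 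Summing over $l$ from $1$ to $k-1$ and adding the three contributions yields the stated formula; I would note in passing that the paper's display has two evident typos to correct — $F_k(r,k-l-2,\dots)$ should read $F_k(r,n-l-2,\dots)$ in the first term, and the index $i$ in the last line's $\binom{2r-4}{i}$ should be $a$.

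The main obstacle I expect is bookkeeping of the boundary: getting the ranges of $a$ and $b$ right so that $E'$ and $E''$ are genuinely non-blue (the asymmetry $0 \le a \le r-3$ versus $0 \le b \le r-2$ is exactly the device that avoids double-counting the sub-case where one flank is ``all degree-$1$ blue''), and making sure the $l$-fold multiplicity for the position of $E_n$ in the block is applied once and only once in each term. A secondary subtlety is the hypothesis $j < [M/N](N-1) + {}_NR_M$: this is precisely the permissibility bound that guarantees no forbidden blue subpath can already be unavoidable, so that every argument above — in particular the assertion that the deleted block has length at most $k-1$ and that the flanking edges can legitimately be taken non-blue — is consistent; I would invoke Proposition~\ref{PropBaseCases} to justify that the relevant sub-counts $F_k$ and $F_k^l$ are the honest ones and not forced to be $0$ or binomial coefficients. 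Once the case analysis is laid out, each individual count is an immediate application of the definitions of $F_k$ and $F_k^l$ together with the deletion-of-a-blue-subpath bijection already used in the $r=2$ cycle theorem.
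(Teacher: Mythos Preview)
Your approach is essentially identical to the paper's: assume $E_n$ is blue, take the maximal blue subpath of length $l$ containing it (so $1\le l\le k-1$), account for the $l$ possible positions of $E_n$ inside that block, and then case-split on the two flanking non-blue edges $E_{M+1}$, $E_{m-1}$ according to whether each has all $r-2$ of its degree-$1$ vertices blue (which forces the remaining degree-$2$ vertex red and pushes the count one edge further out). The paper organizes this as four cases (both $<r-2$; one $=r-2$; the symmetric one; both $=r-2$), collapsing the middle two by symmetry into the doubled term, exactly as you describe.

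One small correction to your write-up: in case (i) the paper's proof has \emph{both} $i_{M+1},i_{m-1}<r-2$, i.e.\ both $a$ and $b$ range over $0,\dots,r-3$. The upper limit $r-2$ on $b$ in the first displayed sum is simply a third typo in the statement (in addition to the two you already flagged), not a double-counting device; your attempted rationalization of the asymmetry is unnecessary and does not actually work, since $b=r-2$ would force propagation and belongs to case (ii). Conversely, note that the paper's own proof never explicitly justifies the factor $l$ appearing in each term --- your explanation of it via the position of $E_n$ in the blue block is correct and fills that gap.
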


%%%%%%%%%%%%%%%%%%%%%%%%%%%%%%%%%%%%%%%%%%%%%%%%%%%%%%%%%%%%%%%%%%%%%%%%%%%%%%%%%%%%%%%%%%%%%%%%%%%%%%%%%%%%%%%%%%%%%%%%%%%%%%%%%%%%%%%%%%%%%%%%%%%%%%

\begin{proof}

Let us assume that $E_n$ is blue, then let $k>l\geq 1$ be the length of the maximum blue subpath containing $E_n$. Let us call this subpath $\Gamma.$ Then $E(\Gamma)$ can be partitioned into three disjoint sets $E_{(<k)}(\Gamma)=E(\Gamma) \cap \{E_1,E_2,...,E_{k-1}\}$ and $E_{(>k)}(\Gamma) \cap \{E_{k+1}, E_{k+2},...,E_{n-1}\},$ and $E_{\omega}={E_n}$ then $|E_{(<k)}(\Gamma)|+|E_{(>k)}(\Gamma)|=l.$

Let us define $M,m \in \{0,1,2,3,...,n-1\}$ such that $M$ is the largest index $i$ where $E_i\in E_{<k}(\Gamma)$ or $M=0$ if $E_{<k}(\Gamma)$ is empty,  and $m$ is the smallest index $u$ such that $E_u \in E_{>k}(\Gamma),$ or $m=0$ if  $E_{>k}(\Gamma)$ is empty. 

Note therefore that edges $E_{M+1}$ and $E_{m-1}$ are not blue but each have at least one degree two vertex that is blue. Based on this assumption we have the following cases:

Let $i_{M+1}$ be the number of degree 1 vertices from $E_{M+1}$ that are blue and $i_{m-1}$ be the number of blue degree 1 vertices from $E_{m-1}$. There are four cases:
\begin{enumerate}
\item \textbf{Both $i_{M+1},i_{m-1}<r-2:$} then there are $F_k(r,n-l-2,j-l(r-1)-1-i_{M+1}-i_{m-1})$ of coloring the remaining vertices. 

\item \textbf{$i_{M+1} <r-2$ and $i_{m-1}=r-2$ :}
In this case, the remaining degree 2 vertex of edge $E_{m-1}$ must be red then let $i_{m-2}\leq r-2$ be the number of blue degree 1 vertices from $E_{m-2},$ then we have $F_k(r,n-l-3,j-l(r-1)-1-i_{M+1}-(r-2)-i_{m-2})$.

\item \textbf{$i_{M+1} =r-2$ and $i_{m-1}<r-2$ :} By a similar argument to the previous case we have $F_k(r,n-l-3,j-l(r-1)-1-i_{m-1}-(r-2)-i_{M+2})$ ways of coloring the vertices where $i_{M+2}\leq r-2$ are the number of degree 1 vertices in $E_{M+2}$. 

\item \textbf{Both $i_{M+1} =r-2$ and $i_{m-1}=r-2$ :}

Then the remaining degree 2 vertices of both $E_{M+1}$ and $E_{m-1}$ are red then let $i_{M+2}$ and $i_{m-2}$ be the number of degree 1 blue vertices from $E_{M+2}$ and $E_{m-2}$ respectively. We see then that we have $F_k(r,m-l-4,n-l(r-1)-1-2(r-2)-i_{m-2}-i_{M+2})$.
\end{enumerate}

\noindent Thus completing the proof of the theorem.
\end{proof}

\section{Conclusion and future directions}
In this paper we provided a recursive formula for the number of ways to place $j$ blue vertices in a loose r-uniform hyperpath on length $n$ while avoiding a monochromatic blue subhypergraph of length $k$. We then used these results to solve the corresponding problem for both $(r-1)$-tight r-paths and $(r-1)$-tight r-cycles, as wellas  loose r-cycles. There are several natural questions and conjectures raised by this investigation, these include:
\begin{enumerate}
    \item How many ways are there to place $j$ blue vertices in an m-tight r-uniform hyperpath of length $n$ while avoiding a monochromatic blue subhypergraph of length $k$?
    \item How many ways are there to place $j$ blue vertices in an m-tight r-uniform hypercycles of length $n$ while avoiding a monochromatic blue subhypergraph of length $k$?
    \item Can we find a close approximation to $F_k(r,n,j)$ which would be more efficient than computing the number directly? Such approximations are used widely in engineering for $F_k(2,n,j)$ \cite{ VL2014}.
\end{enumerate}

We offer the following conjecture that may be useful in obtaining helpful approximations. 

\begin{conjecture}
Let $1 \leq m\leq r-1,$ $F_k(m,r,n,j)$ be the number of ways to place $j$ blue vertices in a path of size $n$ in an $m-$tight $r-$path. Then
$$ F_k(r-1,r,n,j)\leq F_k(m,r,n,j)\leq F_k(1,r,n,j). $$
\end{conjecture}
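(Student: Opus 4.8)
The plan is to establish the full chain $F_k(r-1,r,n,j)\le F_k(r-2,r,n,j)\le\cdots\le F_k(1,r,n,j)$ one step at a time, so it is enough to produce, for every $m$ with $2\le m\le r-1$, an injection $\Phi$ from the set $\mathcal{C}_m$ of valid colourings of the $m$-tight $r$-path on $n$ edges (those with exactly $j$ blue vertices and no monochromatic blue sub-hyperpath of length $k$) into the analogous set $\mathcal{C}_{m-1}$ for the $(m-1)$-tight $r$-path on $n$ edges. The relevant structural fact is that the $m$-tight $r$-path on $n$ edges has $r+(n-1)(r-m)$ vertices; here I use the reading of ``$m$-tight'' under which consecutive edges meet in exactly $m$ vertices, equivalently each new edge introduces $r-m$ new vertices, which is the reading consistent with the Remark following the definition of the $m$-tight path and with the identity relating $T_k(r,n,j)$ to $F_{r+k-2}(2,r+n-2,j)$. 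In particular the $(m-1)$-tight path on $n$ edges has exactly $n-1$ more vertices than the $m$-tight one.

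The injection I would use is a simple ``stretching'' map. Write the vertex set of the $m$-tight path as an initial block $B_0$ of $r$ vertices (the first edge) followed by $n-1$ consecutive blocks $B_1,\dots,B_{n-1}$, each of size $r-m$, where $B_t$ consists of the vertices first appearing in edge $E_{t+1}$. Given $s\in\mathcal{C}_m$, let $\Phi(s)$ be the colouring of the $(m-1)$-tight path obtained by enlarging each $B_t$ for $1\le t\le n-1$ with one extra vertex, coloured red, placed at the end of the block, and keeping the colours of all the original vertices. Then $\Phi(s)$ has exactly $j$ blue vertices, and $\Phi$ is injective because $s$ is recovered from $\Phi(s)$ by deleting the vertices in the $n-1$ distinguished inserted positions.

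The substantive point is that $\Phi(s)$ actually lies in $\mathcal{C}_{m-1}$, and here the bookkeeping works out in our favour. With the blocks laid out as above, a short computation with the edge-position formulas shows that the vertex inserted at the end of $B_t$ falls inside the edge $E'_{t+1}$ of the $(m-1)$-tight path (the blocks being nonempty since $r-m\ge 1$). Hence each of the edges $E'_2,\dots,E'_n$ of the $(m-1)$-tight path contains a red vertex, so at most one edge of $\Phi(s)$, namely possibly the first, is monochromatic blue. When $k\ge 2$ this already precludes any monochromatic blue sub-hyperpath of length $k$; when $k=1$ one observes that the first edge of $\Phi(s)$ is monochromatic blue precisely when the first edge of $s$ is, which is excluded since $s\in\mathcal{C}_m$. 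Iterating $\Phi$ over $m=r-1,r-2,\dots,2$ then yields the conjectured inequalities.

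The main obstacle I anticipate is not combinatorial depth but care with conventions and degenerate cases: fixing the exact meaning of ``$m$-tight'' and of ``sub-hyperpath of length $k$'' being used, and then writing the position arithmetic precisely enough to certify that every inserted red vertex lies in the intended edge; and separately handling the small ranges (such as $n\le 1$, or $n\le k$) in which the structures or the hypotheses degenerate, which can be read off directly from Proposition~\ref{PropBaseCases}. I would not expect any delicate counting, since $\Phi$ intentionally discards all of the internal structure of a colouring and only has to preserve the number of blue vertices.
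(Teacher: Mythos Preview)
The statement you address is presented in the paper as a \emph{conjecture}; the authors offer no proof or proof sketch for it, so there is no argument of theirs to compare against.

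On its own merits, your injection appears to settle the conjecture under the reading of ``$m$-tight'' you adopt (consecutive edges meeting in exactly $m$ vertices, so that each new edge contributes $r-m$ new vertices). The key arithmetic checks out: the vertex you insert at the end of the $t$-th block sits at position $t(r-m+1)+r-1$ of the $(m-1)$-tight path, which is exactly the last vertex of $E'_{t+1}$; hence every edge $E'_2,\dots,E'_n$ of $\Phi(s)$ contains a forced red vertex, and $\Phi(s)$ has at most one monochromatic blue edge. That already yields validity for $k\ge 2$, and your handling of $k=1$ via the identification $E'_1=E_1$ is correct. Injectivity is immediate by deleting the $n-1$ inserted positions. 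Iterating from $m=r-1$ down to $m=2$ gives the full chain.

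One point worth making explicit in a write-up: the paper's two descriptions of ``$m$-tight'' are in tension. The displayed definition shifts edges by $m$, giving overlap $r-m$, whereas the subsequent remark (loose $=$ $1$-tight) and the identity $T_k(r,n,j)=F_{r+k-2}(2,r+n-2,j)$ are only consistent with overlap $m$. Under the ``shift by $m$'' reading the conjectured inequalities would actually reverse, as the case $j=1$ already shows: with one blue vertex no edge is monochromatic, so $F_k(m,r,n,1)$ simply counts vertices, and that count moves in the opposite direction. So your choice of convention is not cosmetic but essential, and you have correctly identified the reading under which the statement is both internally consistent with the rest of the paper and true.
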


\bibliographystyle{amsplain}
\bibliography{number-of-placements.bib}

\end{document}